\newcommand{\subjectto}{\textnormal{subject to}}
\newcommand{\st}{\textnormal{s.t.}}
\newcommand{\maximize}{\textnormal{maximize}}
\newcommand{\minimize}{\textnormal{minimize}}
\newcommand{\norm}[1]{\left\lVert#1\right\rVert}
\DeclareMathOperator*{\diag}{diag}
\newcommand{\eg}{\textit{e.g.}}
\newcommand{\ie}{\textit{i.e.}}
\newcommand{\RR}{\mathbb{R}}
\newcommand{\ZZ}{\mathbb{Z}}
\DeclareMathAlphabet{\mathscr}{U}{dutchcal}{m}{n}
\SetMathAlphabet{\mathscr}{bold}{U}{dutchcal}{b}{n}
\DeclareMathAlphabet{\mathbscr} {U}{dutchcal}{b}{n}
\newcommand{\Ab}{{\bm{\mathcal A}}}
\newcommand{\bb}{{\bm{\mathscr b}}}
\newcommand{\Sb}{{\bm{\mathcal S}}}
\newcommand{\tb}{{\bm{\mathscr t}}}
\newcommand{\Kh}{{K'}}
\newcommand{\Jh}{{J'}}
\newcommand{\Xih}{{\Xi'}}
\newcommand{\slemma}{$\mathcal S$-lemma\xspace}
\newcommand{\X}{\mathcal{X}}
\newcommand{\xih}{{\bm \xi}'}
\newcommand{\tr}{\textup{tr}}
\newtheorem{thm}{Theorem}
\newtheorem{prop}{Proposition}
\newtheorem{lem}{Lemma}
\newtheorem{ex}{Example}
\newtheorem{rem}{Remark}
\begin{document}

\title{Robust Quadratic Programming with Mixed-Integer Uncertainty}
\author[]{Areesh Mittal}
\author[]{Can Gokalp}
\author[]{Grani A.~Hanasusanto}

\affil[]{\small Graduate Program in Operations Research and Industrial Engineering, The University of Texas at Austin, USA}

\maketitle
%

\begin{abstract}
	 We study robust convex quadratic programs where the uncertain problem parameters can contain both continuous and integer components. Under the natural boundedness assumption on the uncertainty set, we show that the generic problems are amenable to exact copositive programming reformulations of polynomial size.  These convex optimization problems are NP-hard but admit a conservative semidefinite programming (SDP) approximation that can be solved efficiently.  We prove that the popular approximate \slemma method---which is valid only in the case of continuous uncertainty---is weaker than our approximation.  We also show that all results can be extended to the two-stage robust quadratic optimization setting if the problem has complete recourse.  We assess the effectiveness of our proposed SDP reformulations and demonstrate their superiority over the state-of-the-art solution schemes on instances of least squares, project management, and multi-item newsvendor problems. 
\end{abstract}

\section{Introduction}\label{sec:introduction}
A wide variety of decision making problems in engineering, physical, or economic systems can be formulated as convex quadratic programs of the form 
\begin{equation}
\label{eq:QCQP}
\begin{array}{clll}
\displaystyle\minimize&\displaystyle  \norm{\bm A (\bm x) \bm \xi}^2 +  \bm b(\bm x)^\top\bm \xi + c (\bm x)\\
\subjectto& \displaystyle {\bm x\in\mathcal X}.
\end{array}
\end{equation}
Here,  $\mathcal X\subseteq\RR^D$ is the feasible set of the decision vector $\bm x$
and is assumed to be described by a polytope,  $\bm{\xi}\in\RR^K$ is a vector of exogenous problem parameters, $\bm A(\bm x):\mathcal X\rightarrow\RR^{M\times K}$ and $\bm b(\bm x):\mathcal X\rightarrow\RR^K$ are matrix- and vector-valued affine functions, respectively, while $c(\bm x):\mathcal X\rightarrow\RR$ is a convex quadratic function. The objective of problem \eqref{eq:QCQP} is to determine the best decision $\bm x\in\mathcal X$ that minimizes the quadratic function $\norm{\bm A (\bm x) \bm \xi}^2 +  \bm b(\bm x)^\top\bm \xi + c (\bm x)$. The generic formulation~\eqref{eq:QCQP} includes the class of  linear programming problems~\cite{schrijver1998theory} as a special case (when $\bm A=\bm 0$), and has numerous important applications, e.g., in portfolio optimization~\cite{markowitz1952portfolio},  least squares regression~\cite{golub2012matrix}, supervised classification~\cite{cortes1995support}, optimal control~\cite{rockafellar1990generalized}, etc. In addition to their exceptional modeling power, quadratic optimization problems of the form~\eqref{eq:QCQP} are attractive as they can be solved efficiently using standard off-the-shelf solvers. 

In many situations of practical interest, the exact values of the parameters $\bm{\xi}$ are unknown when the decisions are made and can only be estimated through limited historical data. Thus, they are subject to potentially significant errors that can adversely impact the out-of-sample performance of an optimal solution~$\bm x$. One popular approach to address decision problems under uncertainty is via \emph{robust optimization}~\cite{ben2009robust}. In this setting, we assume that the vector of uncertain parameters $\bm{\xi}$  lies within a prescribed uncertainty set $\Xi$ and we replace the objective function of \eqref{eq:QCQP} with the \emph{worst-case} function given by
\begin{equation}
\label{eq:wc_functions}
\sup_{\bm{\xi}\in\Xi}\norm{\bm A (\bm x) \bm \xi}^2 + \bm b (\bm x)^\top\bm \xi + c (\bm x). 
\end{equation}
This optimization problem yields a solution $\bm x \in \mathcal X$ that minimizes the quadratic objective function under the most adverse uncertain parameter realization $\bm{\xi}\in\Xi$.  


Robust optimization models are appealing as they require minimal assumptions on the description of the uncertain parameters and because they often lead to efficient solution schemes. In a linear programming setting, the resulting robust optimization problems are tractable for many relevant uncertainty sets and have been broadly applied to problems in engineering, finance, machine learning, and operations management~\cite{ben1998robust, bertsimas2011theory, gorissen2015practical}. Tractable reformulations for robust quadratic programming problems are derived in \cite{goldfarb2003robust,Ahmadreza2017} for the particular case when the quadratic functions (in $\bm x$) exhibit a concave  dependency in the uncertain parameters $\bm\xi$. When the functions are convex in both  $\bm x$ and $\bm{\xi}$ as we consider in this paper, the corresponding robust problems are generically NP-hard if the uncertainty set  is defined by a polytope, but become tractable---by virtue of the \emph{exact} \slemma---if the uncertainty set is defined by an ellipsoid~\cite{ben1998robust, el1997robust}.  Tractable approximation schemes have also been proposed for the  standard setting that we consider in this paper.  If the uncertainty set is described by a finite intersection of ellipsoids then a conservative semidefinite programming (SDP) reformulation is obtained by leveraging the \emph{approximate} \slemma~\cite{ben2002robust}. In  \cite{bertsimas2006tractable}, a special class of functions is introduced to approximate the quadratic terms in \eqref{eq:wc_functions}. The arising robust optimization problems are tractable if the uncertainty sets are defined through affinely transformed norm balls. 
In \cite{Ahmadreza2017}, conservative and progressive SDP approximations are devised  by replacing each quadratic term in \eqref{eq:wc_functions} with linear upper and lower bounds, respectively.  

Most of the existing literature in robust optimization assume that the uncertain problem parameters are continuous and reside in a tractable conic representable set $\Xi$. However, certain applications require the use of mixed-integer uncertainty. Such decision  problems  arise prominently in the supply chain context where demands of  non-perishable products are more naturally represented as integer quantities and in the discrete choice modeling context where the outcomes are chosen from a discrete set of alternatives. 
 Other pertinent examples include  robust optimization applications in logistic regression~\cite{shafieezadeh2015distributionally}, classification problems with noisy labels~\cite{caramanis201114,xu2009robustness} and network optimization~\cite{ahipasaoglu2016distributionally, wiesemann2012robust}. If the uncertain parameters contain mixed-integer components then the problem becomes  computationally formidable even in the simplest setting. Specifically, if all functions are affine in~$\bm{\xi}$ and the uncertain problem parameters are described by binary vectors, then computing the worst-case values in \eqref{eq:wc_functions} is already NP-hard~\cite{GJ79:ComputersIntractability}. The corresponding robust version of~\eqref{eq:QCQP} is tractable only in a few contrived situations, \eg, when the uncertainty set possesses a {total unimodularity} property or is described by the convex hull of polynomially many integer vectors~\cite{ben1998robust}. Perhaps due to these limitations, there are currently very few results in the literature that provide a  systematic and rigorous way to handle generic robust optimization problems with mixed-integer uncertainty. In this paper, we first reformulate the original problem as an equivalent finite-dimensional conic program of polynomial size, which absorbs all the difficulty in its cone, and then replace the cone with tractable inner approximations. An alternate way to handle integer uncertain parameters can be to solve the problem by simply ignoring the integrality assumption. However, doing so adds undesired conservativeness to the uncertainty set. Indeed, in our numerical experiments, we demonstrate that ignoring the integrality assumption on the uncertain parameters leads to overly conservative solutions.

Optimization problems under uncertainty may also involve adaptive recourse decisions which are taken once the uncertain parameters are realized \cite{ben2009robust, shapiro2014lectures}. This setting gives rise to difficult min-max-min optimization problems which are generically NP-hard even if both the first- and the second-stage cost functions are affine in $\bm x$ and $\bm{\xi}$ \cite{BGGN04:LDR}. Thus, they can only be solved approximately, either by employing discretization schemes which approximate the continuum of the uncertainty space with finitely many points~\cite{hadjiyiannis2011scenario, kleywegt2002sample, shapiro2003monte} or by employing decision rule methods, which restrict the set of all possible recourse decisions to simpler parametric forms in $\bm{\xi}$~\cite{BGGN04:LDR, georghiou2015generalized, goh2010distributionally}. We refer the reader to \cite{delage2015robust} for a comprehensive review of recent results in adaptive robust optimization. In this paper, we consider two-stage robust optimization problems with quadratic first- and second-stage objective function and a mixed-integer uncertainty set. We show that if the problem has \emph{complete recourse}, then it can be reformulated as a conic program---which is amenable to tractable approximations.

The conic programming route that we take here to model optimization problems under uncertainty has previously been traversed. In \cite{NRZ11:mixed01}, completely positive programming reformulations are derived to compute best-case expectations of mixed zero-one linear programs under first- and second-order moment information on the joint distributions of the uncertain parameters. 
This result has been  extended and applied to other pertinent settings such as in stochastic appointment scheduling problems, discrete choice models, random walks and sequencing problems \cite{kong2013scheduling,li2014distributionally,natarajan2017reduced}. Recently, equivalent copositive programming reformulations are derived for generic two-stage robust linear programs \cite{hanasusanto2018conic,xu2018copositive}. The resulting optimization problems are amenable to conservative semidefinite programming reformulations which are often stronger than the ones obtained from employing quadratic decision rules on the recourse function. In \cite{gao2016disruption}, the authors provide completely positive reformulation for a two-stage distributionally robust supply chain risk mitigation problem. They allow some components of $\bm \xi$ to be binary, but assume precise knowledge of the first- and the second-order moments of the distribution of $\bm \xi$. The objective function that they consider is quadratic in the second-stage decision variables but affine in $\bm \xi$. In contrast to \cite{gao2016disruption},  we assume no information about the distribution of $\bm \xi$, other than the support. Furthermore, we allow the objective function to be quadratic in the decision variables, as well as in $\bm \xi$, which helps us model a more general class of robust problems, \eg, robust least squares \cite{el1997robust}.

In this paper, we advance the state-of-the-art in robust optimization along several
directions. We summarize our main contributions as follows: 
\begin{enumerate}
\item We prove that any robust convex quadratic program can be reformulated as a copositive program of polynomial size if the uncertainty set is given by a \emph{bounded} mixed-integer polytope. We  further show that the exactness result can be extended to the two-stage robust quadratic optimization setting  if the problem has \emph{complete recourse}. 
\item By employing the hierarchies of semidefinite representable cones to approximate the copositive cones, we obtain  sequences of tractable conservative approximations for the robust problem. These approximations can be made to have any arbitrary accuracy. We prove that even the simplest of these approximations is stronger than the well-known approximate \slemma method if the problem instance has only continuous uncertain parameters. Furthermore, when some uncertain parameters are restricted to take integer values, the approximate \slemma method is not applicable, while our method still generates a high-quality conservative solution.
\item We compare our approximation method to other state-of-the-art approximation schemes through extensive numerical experiments. We show that our approximation method generates better estimates of worst-case cost and yields less conservative solutions. We also demonstrate that ignoring the integrality assumption on the uncertainty set may lead to inferior solutions to the robust problem.
\item To the best of our knowledge, we are the first to provide an exact conic programming reformulation and to propose tractable semidefinite programming approximations for well-established classes of one-stage and two-stage robust quadratic programs. 
\end{enumerate}



The remainder of the paper is structured as follows. We formulate and discuss the generic robust quadratic programs in Section \ref{sec:formulation}. We then derive the copositive programming reformulation in Section \ref{sec:conic reformulation}. Section \ref{sec:SDP_formulation} develops a conservative SDP reformulation and provides a theoretical comparison with the popular approximate \slemma method.  In Section \ref{sec:extensions}, we extend the results of Section \ref{sec:conic reformulation} along several directions including two-stage robust quadratic optimization. We demonstrate the impact of our proposed reformulation via numerical experiments in Section \ref{sec:experiments}, and finally, we conclude in Section \ref{sec:conclusion}. 

\paragraph{Notation:} We use $\ZZ\ (\ZZ_+)$ to denote the set of (non-negative) integers. For any positive integer $I$, we use $[I]$ to denote the index set $\{1,\dots,I\}$. We use $\norm{.}_p$ to denote the $l_p$-norm. We drop the subscript and write $\norm{.}$ when referring to the $l_2$-norm. The identity matrix and the vector of all ones are denoted by $\mathbb I$ and $\mathbf e$, respectively. The dimension of such matrices will be clear from the context. We denote by $\tr(\bm M)$ the trace of a square matrix  $\bm M$. 
For a vector $\bm v$, $\diag(\bm v)$ denotes the diagonal matrix with $\bm v$ on its diagonal; whereas for a square matrix $\bm M$, $\diag(\bm M)$ denotes the vector comprising the diagonal elements of~$\bm M$. We define $\bm P \circ \bm Q$ as the Hadamard product (element-wise product) of two matrices $\bm P$ and $\bm Q$ of the same size. For any  integer $Q\in \ZZ_+$, we define   
$\mathbf v_Q=[2^0\;2^1\;\cdots\;2^{Q-1}]^\top$ as the vector comprising all $q$-th powers of 2, for $q= 0,1,\ldots,Q-1$. 
We define  by 
$\mathbb S^K$ ($\mathbb S_+^K$) the space of all symmetric (positive semidefinite) matrices in $\RR^{K\times K}$. 
The cone of copositive matrices is denoted by $\mathcal C=\{\bm M\in\mathbb S^K:\bm{\xi}^\top\bm{M}\bm{\xi}\geq 0\;\forall\bm{\xi}\geq \bm 0\}$, while its dual cone, the cone of completely positive matrices, is denoted by $\mathcal C^*=\{\bm{M}\in\mathbb S^K:\bm M=\bm B\bm B^\top \text{ for some } \bm B\in\RR_+^{K\times g(K)}\}$, where $g(K)= \max\{{K+1\choose 2}-4,K\}$ \cite{SBBJ15:CP-RANK}.
For any $\bm P, \bm Q\in\mathbb S^K$, the relations $\bm P\succeq\bm Q$, $\bm P\succeq_{\mathcal C}\bm Q$, and $\bm P\succeq_{\mathcal {C}^*}\bm Q$ indicate that $\bm P-\bm Q$ is an element of $\mathbb S_+^K$, $\mathcal C$, and $\mathcal C^*$, respectively.

\section{Problem Formulation}\label{sec:formulation}
We study robust convex quadratic programs (RQPs) of the form 
\begin{equation}
\label{eq:RO}
\begin{array}{clll}
\displaystyle\minimize&\displaystyle\sup_{\bm{\xi}\in\Xi}  \norm{\bm A (\bm x) \bm \xi}^2 +  \bm b(\bm x)^\top\bm \xi + c (\bm x)\\
\subjectto& \displaystyle {\bm x\in\mathcal X},
\end{array}
\end{equation}
where the set $\mathcal X$ and the functions  $\bm A(\bm x):\mathcal X\rightarrow\RR^{M\times K}$, $\bm b(\bm x):\mathcal X\rightarrow\RR^K$, and $c(\bm x):\mathcal X\rightarrow\RR$ have the same definitions as those in \eqref{eq:QCQP}. 
The vector $\bm{\xi}\in\RR^K$ comprises all the uncertain problem parameters and is assumed to belong to the uncertainty set $\Xi$ 
given by a bounded mixed-integer polyhedral set 
\begin{equation}
\label{eq:uncertainty_set}
\Xi=\left\{\bm \xi\in\RR_+^K:
\begin{array}{l}\bm S\bm \xi= \bm t\\
\xi_{\ell}\in\ZZ\quad\forall \ell\in[L]
\end{array}\right\},
\end{equation}
where $\bm S\in\RR^{J\times K}$ and $\bm t\in\RR^{J}$. We assume without loss of generality that the first $L$ elements of $\bm\xi$ are integer, while the remaining $K-L$ are continuous. Since $\Xi$ is bounded, we may further assume that there exists a scalar integer $Q\in\ZZ_+$ such that $\xi_l\in \{0,\cdots,2^{Q}-1\}$  for every $\ell\in[L]$. Note that the quantity $Q$ is bounded by a polynomial function in the bit length of the description of $\bm S$ and $\bm t$. 
\begin{ex}[Robust Portfolio Optimization] Consider the classical Markowitz mean-variance portfolio optimization problem
\begin{equation}
\label{eq:Markowitz}
\begin{array}{clll}
\minimize & \displaystyle\bm x^\top{\bm{\Sigma}}\bm x-\lambda{\bm{\mu}}^\top\bm x\\\
\subjectto & \displaystyle\bm x\in\Delta^K,
\end{array}
\end{equation}
where $\Delta^K$ is the unit simplex in $\RR^K$, $\lambda\in[0,\infty)$ is the prescribed risk tolerance level of the investor, while $\bm\mu\in\RR^K$ and $\bm\Sigma\in\mathbb S^K$ are the true mean and covariance matrix of the asset returns, respectively. The objective of this problem is to determine the best vector of weights $\bm x\in\Delta^K$ that maximizes the  mean portfolio return ${\bm{\mu}}^\top\bm x$ and that  also minimizes the portfolio risk that is captured by the variance term $\bm x^\top{\bm{\Sigma}}\bm x$. Here, the trade-off between these two terms is controlled by the scalar $\lambda$ in the objective function. 

In practice, the true values of the parameters $\bm\mu$ and $\bm{\Sigma}$ are unknown and can only be estimated by using the available $N$ historical asset returns $\{\hat{\bm{\xi}}_n\}_{n\in[N]}$,  as follows:
\begin{equation*}
\hat{\bm{\mu}}=\frac{1}{N}\sum_{n\in[N]}{\hat{\bm\xi}_n}\quad\text{and}\quad \hat{\bm{\Sigma}}=\frac{1}{N-1}\sum_{n\in[N]}\left({\hat{\bm\xi}_n}-\hat{\bm{\mu}}\right)\left({\hat{\bm\xi}_n}-\hat{\bm{\mu}}\right)^\top.
\end{equation*}
In the robust optimization setting, we assume that the precise location of  each sample point $\hat{\bm{\xi}}_n$ is uncertain and is only known to belong to a prescribed uncertainty set $\Xi_n$ containing $\hat{\bm{\xi}}_n$. 
To bring the resulting  problem into the standard form \eqref{eq:RO}, we introduce the expanded uncertainty set
\begin{equation*}
\Xi=\left\{\left((\hat{\bm \xi}_n)_{n\in[N]},(\hat{\bm \chi}_n)_{n\in[N]}\right)\in\RR_+^{NK+NK}:
\hat{\bm \xi}_n\in\Xi_n,\;\;\hat{\bm \chi}_n={\hat{\bm\xi}_n}-\frac{1}{N}\sum_{n'\in[N]}{\hat{\bm\xi}_{n'}}\quad\forall n\in[N]
\right\}
\end{equation*}
comprising the terms $\hat{\bm\xi}_n$ and ${\hat{\bm\xi}_n}-\hat{\bm{\mu}}$, $n\in[N]$. 
Using this uncertainty set,  we arrive at the following robust version of \eqref{eq:Markowitz}:
\begin{equation*}
\label{eq:Markowitz_robust}
	\begin{array}{clll}
	\minimize &\displaystyle\sup_{\left((\hat{\bm \xi}_n)_n,(\hat{\bm \chi}_n)_n\right)\in\Xi}\left( \frac{1}{N-1}\sum_{n\in[N]}(\hat{\bm \chi}_n^\top\bm x)^2-\frac{\lambda}{N}\sum_{n\in[N]}\hat{\bm \xi}_n^\top\bm x\right)\\
	\subjectto & \bm x\in\Delta^K.
	\end{array}
\end{equation*}	 
This problem constitutes an instance of \eqref{eq:RO} with the input parameters 
\begin{equation*}
 \bm A(\bm x)=\frac{1}{\sqrt{N-1}}\begin{bmatrix}\bm 0^\top&\cdots  &\bm 0^\top&\bm 0^\top&\cdots&\ \bm 0^\top\\
\vdots&\ddots  &\vdots&\vdots&\ddots&\vdots\\
 \bm 0^\top&\cdots  &\bm 0^\top&\bm 0^\top&\cdots&\ \bm 0^\top\\
 \bm 0^\top&\cdots  &\bm 0^\top&\bm x^\top&\cdots&\ \bm 0^\top\\
 \vdots&\ddots  &\vdots&\vdots&\ddots&\vdots\\
 \bm 0^\top&\cdots  &\bm 0^\top&\bm 0^\top&\cdots&\ \bm x^\top\\
  \end{bmatrix},\quad  \bm b(\bm x)=-\frac{\lambda}{N}\begin{bmatrix}\bm x\\\vdots\\\bm x\\ \bm 0\\\vdots\\\bm 0\end{bmatrix},\quad\text{and}\quad c(\bm x)=0. 
\end{equation*}

\end{ex}
\begin{ex}[Robust Project Crashing]
	\label{exa:project_crashing}
	Consider a project that is described by an activity-on-arc network~$\mathcal N(\mathcal V,\mathcal A)$, where $\mathcal V$ is the set of nodes  representing the events, while $\mathcal A$ is the set of arcs  representing the activities. We assume that that node with index $1$ represents the start of the project and the node with index $|\mathcal V|$ represents the end of the project. We define $d_{ij}\in[0,1]$ to be the nominal duration of the  activity $(i,j)\in\mathcal A$. Here, we assume that the durations $d_{ij}$, $(i,j)\in\mathcal A$, are already normalized so that they  take values in the unit interval.  
	
	The goal of project crashing is to determine the best resource assignments $x_{ij}$, $(i,j)\in\mathcal A$, on the activities  that minimize the project completion time or makespan. If the activity duration $d_{ij}-x_{ij}$ represents the length of the arc $(i,j)$, then the project completion time can be determined by computing the length of the longest path from the start node to the end node. We can formulate project crashing as the optimization problem 
	\begin{equation*}
	\label{eq:robust_crashing}
	\begin{array}{clll}
	\minimize & \displaystyle\sup_{\bm z\in\mathcal Z}\;\sum_{(i,j)\in\mathcal A}({d}_{ij}-x_{ij})z_{ij} \\
	\subjectto & \displaystyle\bm x\in\mathcal X,
	\end{array}
	\end{equation*}
	where
	\begin{equation*}
	\mathcal Z=\left\{\bm z\in\{0,1\}^{|\mathcal A|}:\sum_{j:(i,j)\in\mathcal A}z_{ij}-\sum_{j:(j,i)\in\mathcal A}z_{ji}=\left\{\begin{array}{ll}
	1&\text{if }i=1\\
	-1&\text{if }i=|\mathcal V|\\
	0&\text{if otherwise}
	\end{array}\right.,\quad\forall i\in \mathcal V\right\}. 
	\end{equation*}
If the task durations $\bm{d}$ are uncertain and are only known to belong to the prescribed uncertainty set $\mathcal D\subseteq[0,1]^{|\mathcal A|}$, then we arrive at the robust optimization problem 
\begin{equation}
\label{eq:robust_project_crashing}
\begin{array}{clll}
\minimize & \displaystyle\sup_{\bm d\in\mathcal D}\left(\sup_{\bm z\in\mathcal Z}\;\sum_{(i,j)\in\mathcal A}({d}_{ij}-x_{ij})z_{ij}\right) \\
\subjectto & \displaystyle\bm x\in\mathcal X.
\end{array}
\end{equation}
By combining the suprema over $\mathcal D$ and $\mathcal Z$, and linearizing the bilinear terms $d_{ij}z_{ij}$, $(i,j)\in\mathcal A$, we can reformulate the objective of this problem as
\begin{equation}
\label{eq:objective_project_crashing}
\begin{array}{rlll}
\displaystyle\sup_{\bm d\in\mathcal D}\sup_{\bm z\in\mathcal Z}\;\sum_{(i,j)\in\mathcal A}({d}_{ij}-x_{ij})z_{ij}=&\displaystyle\sup_{(\bm d,\bm z,\bm q)\in\Xi}\;\mathbf e^\top\bm q-\bm x^\top\bm z,
\end{array}
\end{equation}
where
\begin{equation}
\label{eq:uncertainty_set_project_crashing}
\Xi=\left\{(\bm d,\bm z,\bm q)\in\mathcal D\times\mathcal Z\times\RR_+^{|\mathcal A|}:\bm q\leq\bm z,\;\bm q\leq\bm d,\;\bm q\geq \bm d-\mathbf e+\bm z\right\}. 
\end{equation}
Using the new objective function \eqref{eq:objective_project_crashing} and uncertainty set \eqref{eq:uncertainty_set_project_crashing}, the resulting robust optimization problem constitutes an instance of \eqref{eq:RO} with the input parameters 
$\bm A(\bm x)=\bm 0$,  $\bm b(\bm x)=[
\bm 0^\top\;\; -\bm x^\top \;\; \mathbf e^\top
]^\top$, and $c(\bm x)=0$.

\end{ex}

In the remainder of the paper, for any fixed $\bm x\in\mathcal X$, we define the mixed-integer quadratic program
\begin{equation}
\label{eq:quadratic_program}
Z(\bm x)=\sup_{\bm{\xi}\in\Xi}\norm{\bm A (\bm x) \bm \xi}^2 + \bm b (\bm x)^\top\bm \xi + c (\bm x),
\end{equation}
which corresponds to the inner subproblem in the objective of \eqref{eq:RO}. 
We may therefore represent \eqref{eq:RO} as
\begin{equation*}
\begin{array}{clll}
\displaystyle\minimize&\displaystyle Z(\bm x)\\
\subjectto& \displaystyle {\bm x\in\mathcal X}.
\end{array}
\end{equation*}
In the next section, we derive exact copositive programming reformulation for evaluating $Z(\bm x)$. By substituting $Z(\bm x)$ with the emerging copositive program, we obtain an equivalent finite-dimensional convex reformulation for the RQP~\eqref{eq:RO} that is principally amenable to numerical solution. 

\section{Copositive Programming Reformulation}\label{sec:conic reformulation}
In this section, we derive an equivalent copositive programming reformulation for \eqref{eq:RO} by adopting the following steps. For any fixed $\bm x\in \mathcal X$, we first derive a copositive upper bound on $Z(\bm x)$. We then show that the resulting reformulation is in fact exact under the boundedness assumption on the uncertainty set $\Xi$.

\subsection{A Copositive Upper Bound on  \texorpdfstring{$Z(\bm x)$}{}}
To derive the  copositive reformulation, we leverage the following result by Burer \cite{Burer09:copositive} which enables us to reduce a generic mixed-binary quadratic program into an equivalent conic program of polynomial size. 
\begin{thm}[{\cite[Theorem 2.6]{Burer09:copositive}}]
	\label{thm:burer}
	The mixed-binary quadratic program
	\begin{equation}
	\label{eq:standard_QP}
	\begin{array}{clll}
	\displaystyle\maximize&\displaystyle\bm{\xi}^\top\bm{Q}\bm{\xi}+\bm r^\top\bm{\xi} \\
	\subjectto& \displaystyle \bm{\xi}\in\RR_+^P\\
	&\displaystyle \bm F\bm{\xi}=\bm g\\
	&\displaystyle \xi_\ell\in\{0,1\}&\forall \ell\in\mathcal L
	\end{array}
	\end{equation}
	is equivalent to the completely positive program
	\begin{equation*}
	\begin{array}{clll}
	\displaystyle\maximize&\displaystyle\tr(\bm{\Omega}\bm Q)+\bm r^\top\bm{\xi} \\
	\subjectto& \displaystyle \bm{\xi}\in\RR_+^P,\;\bm{\Omega} \in\mathbb S_+^P\\
	&\displaystyle \bm F\bm{\xi}=\bm g,\;\;\diag(\bm F\bm\Omega\bm F^\top)=\bm g\circ\bm g\\ 
	&\displaystyle \xi_\ell=\Omega_{\ell\ell}\qquad\forall \ell\in\mathcal L\\
	&\begin{bmatrix}  \bm\Omega &  \bm\xi\\
	\bm\xi^\top & 1\\
	\end{bmatrix}\succeq_{\mathcal C^*} \bm 0,
	\end{array}
	\end{equation*}
	where $\mathcal L\subseteq[P]$, and it is implicitly assumed that $\xi_{\ell}\leq 1,\;\ell\in\mathcal L$, for any $\bm{\xi}\in\RR_+^P$ satisfying $\bm F\bm{\xi}=\bm g$. 
\end{thm}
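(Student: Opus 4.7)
The plan is to establish the equivalence by proving the two inequalities between the optimal values of~\eqref{eq:standard_QP} and its proposed completely positive counterpart.

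The easy direction is to show that the CPP is a relaxation: for every feasible $\bm\xi$ of~\eqref{eq:standard_QP}, the lifted pair $(\bm\xi,\bm\Omega)$ with $\bm\Omega=\bm\xi\bm\xi^\top$ is feasible for the CPP with matching objective value. Indeed, $\bm F\bm\xi=\bm g$ gives $\diag(\bm F\bm\Omega \bm F^\top)=\diag(\bm g\bm g^\top)=\bm g\circ\bm g$; the binary constraint $\xi_\ell\in\{0,1\}$ yields $\Omega_{\ell\ell}=\xi_\ell^2=\xi_\ell$ for $\ell\in\mathcal L$; and the bordered matrix factors as $\begin{bmatrix}\bm\xi\\1\end{bmatrix}\begin{bmatrix}\bm\xi^\top & 1\end{bmatrix}$, which is a rank-one completely positive matrix. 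The quadratic objective $\bm\xi^\top\bm Q\bm\xi+\bm r^\top\bm\xi$ coincides with $\tr(\bm\Omega\bm Q)+\bm r^\top\bm\xi$.

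The reverse direction is the heart of the argument. Take any feasible $(\bm\xi,\bm\Omega)$ for the CPP and exploit the defining decomposition of the completely positive cone to write
\begin{equation*}
\begin{bmatrix}\bm\Omega & \bm\xi\\ \bm\xi^\top & 1\end{bmatrix}\;=\;\sum_{r=1}^{R}\begin{bmatrix}\bm\zeta_r\\ \mu_r\end{bmatrix}\begin{bmatrix}\bm\zeta_r^\top & \mu_r\end{bmatrix},
\end{equation*}
with $\bm\zeta_r\in\RR_+^P$ and $\mu_r\in\RR_+$. After discarding atoms with $\mu_r=0$ (which contribute only to $\bm\Omega$ and can be shown to be inconsequential), set $\lambda_r=\mu_r^2$ and $\bm{\bar\xi}_r=\bm\zeta_r/\mu_r$, so that $\sum_r\lambda_r=1$, $\bm\xi=\sum_r\lambda_r\bm{\bar\xi}_r$, and $\bm\Omega=\sum_r\lambda_r\bm{\bar\xi}_r\bm{\bar\xi}_r^\top$. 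I would then verify that each atom $\bm{\bar\xi}_r$ is feasible for~\eqref{eq:standard_QP}. The linear constraint follows from the diagonal identity: for each $j$, $\sum_r\lambda_r(\bm F\bm{\bar\xi}_r)_j^2=g_j^2=\bigl(\sum_r\lambda_r(\bm F\bm{\bar\xi}_r)_j\bigr)^2$, and Jensen's inequality forces equality, so $(\bm F\bm{\bar\xi}_r)_j=g_j$ for every $r$. The binary condition $\xi_\ell=\Omega_{\ell\ell}$ expands to $\sum_r\lambda_r\bar\xi_{r,\ell}=\sum_r\lambda_r\bar\xi_{r,\ell}^2$; combining this with $\bar\xi_{r,\ell}\in[0,1]$ yields $\bar\xi_{r,\ell}^2=\bar\xi_{r,\ell}$, i.e., $\bar\xi_{r,\ell}\in\{0,1\}$. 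Consequently, $\tr(\bm\Omega\bm Q)+\bm r^\top\bm\xi=\sum_r\lambda_r\bigl(\bm{\bar\xi}_r^\top\bm Q\bm{\bar\xi}_r+\bm r^\top\bm{\bar\xi}_r\bigr)$ is a convex combination of QP objective values at feasible points, so at least one $\bm{\bar\xi}_r$ achieves a value no smaller than the CPP optimum.

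The main obstacle is the binary constraint step, which crucially relies on the implicit boundedness assumption stated in the theorem: $\bm F\bm{\bar\xi}_r=\bm g$ together with $\bm{\bar\xi}_r\geq\bm 0$ must imply $\bar\xi_{r,\ell}\leq 1$ for every $\ell\in\mathcal L$. Without this assumption, $\bar\xi_{r,\ell}^2$ could exceed $\bar\xi_{r,\ell}$, and the sum identity would no longer force each atom to be binary. The argument for the continuous variables is correspondingly cleaner because it only needs the equality-of-Jensen trick with the diagonal identity; handling atoms with $\mu_r=0$ requires a small separate argument showing that such ``directions of recession'' can be absorbed without violating any constraint or changing the objective attainability claim.
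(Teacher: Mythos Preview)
The paper does not supply its own proof of this theorem; it is quoted verbatim as \cite[Theorem~2.6]{Burer09:copositive} and used as a black box. Your proposal is essentially Burer's original argument, and it is correct in outline.

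One remark on the part you flag yourself. The atoms with $\mu_r=0$ are not merely ``inconsequential'': they are recession directions of the linear relaxation. Your Jensen argument applied to $\diag(\bm F\bm\Omega\bm F^\top)=\bm g\circ\bm g$ actually does the work---it forces $\bm F\bm\zeta_r=\bm 0$ for every such atom, and the implicit bound $\xi_\ell\le 1$ then forces $(\bm\zeta_r)_\ell=0$ for $\ell\in\mathcal L$. What remains is to argue that $\bm\zeta_r^\top\bm Q\bm\zeta_r\le 0$; otherwise, moving along $\bm\zeta_r$ from any feasible mixed-binary point (the binary coordinates are unaffected since $(\bm\zeta_r)_\ell=0$) would make~\eqref{eq:standard_QP} unbounded. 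Once this is in place, the recession atoms can only decrease the CPP objective, and your convex-combination bound goes through. This is the ``small separate argument'' you allude to; it is worth writing out, because it is exactly where the equivalence can fail if the implicit boundedness hypothesis is dropped.
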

We also rely on the following standard result which 
allows us to represent a scalar integer variable using only logarithmically many binary variables \cite{watters1967letter}.
\begin{lem}
	\label{lem:integer_binary}
If $\xi$ is a scalar integer decision variable taking values in $\{0,\cdots,2^{Q}-1\}$, with $Q\in\ZZ_+$, then we can reformulate it concisely by employing $Q$ binary decision variables $\chi_1,\cdots,\chi_{Q}\in\{0,1\}$, as follows:
\begin{equation*}
\xi=\sum_{q\in[Q]}2^{q-1}\chi_{q}=\mathbf v_Q^\top\bm{\chi}.
\end{equation*}
\end{lem}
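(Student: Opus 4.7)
The plan is to establish this lemma via the standard binary expansion argument, verifying a bijection between $\{0,\dots,2^{Q}-1\}$ and $\{0,1\}^{Q}$ induced by the map $\bm\chi\mapsto \mathbf v_Q^\top\bm\chi$. First I would check the image is contained in the target set: for any $\bm\chi\in\{0,1\}^Q$, the sum $\sum_{q\in[Q]}2^{q-1}\chi_q$ is a non-negative integer bounded above by $\sum_{q\in[Q]}2^{q-1}=2^Q-1$, so it lies in $\{0,\dots,2^Q-1\}$.

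Next I would argue surjectivity by induction on $Q$. The base case $Q=1$ is trivial since $\xi\in\{0,1\}$ corresponds directly to $\chi_1$. For the inductive step, given $\xi\in\{0,\dots,2^{Q+1}-1\}$, set $\chi_1$ to the parity bit of $\xi$, namely $\chi_1=\xi\bmod 2$, and observe that $(\xi-\chi_1)/2$ is a non-negative integer in $\{0,\dots,2^{Q}-1\}$, to which the induction hypothesis applies, yielding $\chi_2,\dots,\chi_{Q+1}\in\{0,1\}$ with $(\xi-\chi_1)/2=\sum_{q\in[Q]}2^{q-1}\chi_{q+1}$. Multiplying by $2$ and adding $\chi_1$ gives the required representation. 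Equivalently, this argument just invokes existence and uniqueness of the base-2 positional representation of a non-negative integer.

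Consequently, the substitution $\xi=\mathbf v_Q^\top\bm\chi$ with $\bm\chi\in\{0,1\}^Q$ is a valid reformulation that does not alter the feasible set of values for $\xi$. There is no real obstacle here; the content is purely the elementary fact that every integer in $\{0,\dots,2^Q-1\}$ has a unique $Q$-bit binary encoding, and the proof would simply record this for later use when we wish to apply Theorem~\ref{thm:burer} to an integer-valued variable by first replacing it with logarithmically many binary variables.
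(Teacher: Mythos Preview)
Your argument is correct: the binary expansion map $\bm\chi\mapsto\mathbf v_Q^\top\bm\chi$ is a bijection from $\{0,1\}^Q$ onto $\{0,\dots,2^Q-1\}$, and your induction cleanly establishes surjectivity (injectivity then follows by cardinality, or equivalently by uniqueness of the base-$2$ representation). The paper itself does not supply a proof of this lemma; it simply records it as a standard device, with a reference, so your write-up is already more detailed than what the paper provides.
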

Using Theorem \ref{thm:burer} and Lemma \ref{lem:integer_binary}, we are now ready to state our first result. 

\begin{prop} 
	\label{prop:CPP}
	For any fixed decision $\bm x\in\X$ the optimal value of the quadratic maximization problem \eqref{eq:quadratic_program} coincides with the optimal value of the completely positive program
	\begin{equation}
	\label{eq:CPP}
	\begin{array}{clll}
	\displaystyle  Z(\bm x) = &\displaystyle \sup&\tr\left(\Ab(\bm x) \bm \Omega \Ab(\bm x) ^\top\right) + \bb(\bm x)^\top\xih + c (\bm x)  \\
	&\st&\displaystyle {\xih}\in\RR_+^{{\Kh}},\;\bm{\Omega} \in\mathbb S_+^{{\Kh}}\\
	&& \Sb  \xih = \tb,\;\diag(\Sb\bm \Omega\Sb^\top)=\tb\circ\tb\\
	&& {\xi}_{\ell}'=\Omega_{\ell \ell} \quad \forall \ell\in\left[L Q\right]\\
	&&\begin{bmatrix}  \bm\Omega &  \xih\\
	 \xih^\top & 1\\
	\end{bmatrix}\succeq_{\mathcal C^*} \bm 0,
	\end{array}
	\end{equation}
	where 
	\begin{equation}
	\label{eq:expanded_parameters}
	\begin{array}{c}
	\Sb=\begin{bmatrix}
	\bm 0 & \cdots & \bm 0 &  \bm 0 & \cdots & \bm 0 & \bm S \\
	-\mathbf v_{Q}^\top & \cdots & \bm 0^\top & \bm 0^\top & \cdots & \bm 0^\top & \mathbf e_1^\top \\
	\vdots & \ddots & \vdots & \vdots & \ddots & \vdots & \vdots \\
	\bm 0^\top & \cdots &  -\mathbf v_{Q}^\top & \bm 0^\top & \cdots & \bm 0^\top & \mathbf e_L^\top \\
	\mathbb I & \cdots & \bm 0 &\mathbb I & \cdots & \bm 0& \bm 0\\
	\vdots & \ddots & \vdots & \vdots & \ddots & \vdots & \vdots \\
	\bm 0 & \cdots & \mathbb I &\bm 0 & \cdots & \mathbb I & \bm 0  \\
	\end{bmatrix}\in\RR^{\Jh\times \Kh},\quad 
	\tb=\begin{bmatrix}
	\bm t \\
	0\\
	\vdots \\
	0\\
	\mathbf e\\
	\vdots \\
	\mathbf e\\
	\end{bmatrix}\in\RR^{\Jh},\\
	\quad\quad\Ab(\bm x)=\begin{bmatrix}
	\bm 0&\cdots & \bm 0 & \bm 0&\cdots & \bm 0 & \bm A(\bm x)
	\end{bmatrix}\in\RR^{M\times \Kh}\quad \text{ and }\\\quad\bb(\bm x)=\begin{bmatrix}
	\bm 0^\top&\cdots & \bm 0^\top & \bm 0^\top&\cdots & \bm 0^\top & \bm b(\bm x)^\top
	\end{bmatrix}^\top\in\RR^{\Kh},
	\end{array}
	\end{equation}
with
\begin{equation*}
		\Jh=LQ+J+L\quad\text{and}\quad\Kh=2LQ+K.
\end{equation*}
\end{prop}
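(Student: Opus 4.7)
The plan is to rewrite the inner maximization \eqref{eq:quadratic_program} as a pure mixed-binary quadratic program of the standard form \eqref{eq:standard_QP} and then to invoke Theorem~\ref{thm:burer} verbatim. First I would apply Lemma~\ref{lem:integer_binary} to each integer component $\xi_\ell$, $\ell\in[L]$, writing $\xi_\ell = \mathbf v_Q^\top\bm\chi_\ell$ for a fresh binary vector $\bm\chi_\ell\in\{0,1\}^Q$. This trades the integrality condition $\xi_\ell\in\ZZ$ (combined with the implicit bound $\xi_\ell\in\{0,\ldots,2^Q-1\}$ inherited from the boundedness of $\Xi$) for the linear equality $-\mathbf v_Q^\top\bm\chi_\ell + \xi_\ell = 0$ together with the binarity of $\bm\chi_\ell$.

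At this intermediate point, Theorem~\ref{thm:burer} cannot yet be applied because its implicit hypothesis---namely, that $\chi_{\ell,q}\leq 1$ be a consequence of the equality constraints and nonnegativity alone---is not satisfied. My second step is therefore to adjoin complementary slack vectors $\bar{\bm\chi}_\ell\in\RR_+^Q$ constrained by $\bm\chi_\ell + \bar{\bm\chi}_\ell = \mathbf e$ for each $\ell\in[L]$, since nonnegativity together with this budget constraint forces $\chi_{\ell,q}\in[0,1]$ automatically. Stacking all variables as
\begin{equation*}
\xih = \bigl[\bm\chi_1^\top,\,\ldots,\,\bm\chi_L^\top,\,\bar{\bm\chi}_1^\top,\,\ldots,\,\bar{\bm\chi}_L^\top,\,\bm\xi^\top\bigr]^\top\in\RR_+^{K'},\qquad K'=2LQ+K,
\end{equation*}
the three families of constraints $\bm S\bm\xi=\bm t$, $\mathbf v_Q^\top\bm\chi_\ell = \xi_\ell$, and $\bm\chi_\ell + \bar{\bm\chi}_\ell = \mathbf e$ (the latter two for each $\ell\in[L]$) pack into exactly the single linear system $\Sb\xih = \tb$ with $\Sb\in\RR^{J'\times K'}$ and $\tb\in\RR^{J'}$ as displayed in \eqref{eq:expanded_parameters}, where $J' = J+L+LQ$.

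Because the dummy column blocks of $\Ab(\bm x)$ and $\bb(\bm x)$ are zero, the objective $\|\Ab(\bm x)\xih\|^2+\bb(\bm x)^\top\xih+c(\bm x)$ evaluates to $\|\bm A(\bm x)\bm\xi\|^2+\bm b(\bm x)^\top\bm\xi+c(\bm x)$ on the feasible set. Consequently $Z(\bm x)$ equals the optimal value of the mixed-binary quadratic program in $\xih$ with $\xih\in\RR_+^{K'}$, $\Sb\xih=\tb$, and binarity imposed on the first $LQ$ components of $\xih$. This is precisely the form of \eqref{eq:standard_QP} with $\mathcal L = [LQ]$, and the implicit upper-bound hypothesis is satisfied by construction. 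Applying Theorem~\ref{thm:burer} and rewriting the quadratic trace via the cyclic identity $\tr(\bm\Omega\,\Ab(\bm x)^\top\Ab(\bm x)) = \tr(\Ab(\bm x)\bm\Omega\Ab(\bm x)^\top)$ yields exactly the completely positive program \eqref{eq:CPP}.

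The main subtlety lies in the second step: the complementary slacks $\bar{\bm\chi}_\ell$ are not logically required to encode the original problem, but they are essential to satisfy Burer's implicit boundedness hypothesis. This is precisely what inflates the dimension from the naive $LQ+K$ to $K'=2LQ+K$ and adds the extra $LQ$ rows of budget constraints to~$\Sb$. Beyond this, the proof is a bookkeeping exercise in matching the blocks of $\Sb$, $\tb$, $\Ab(\bm x)$, and $\bb(\bm x)$ to the corresponding pieces of the reformulated problem before invoking Theorem~\ref{thm:burer}.
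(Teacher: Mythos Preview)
Your proposal is correct and follows essentially the same route as the paper: apply Lemma~\ref{lem:integer_binary} to binarize the integer components, adjoin slack variables $\bar{\bm\chi}_\ell$ (the paper calls them $\bm\eta_\ell$) via $\bm\chi_\ell+\bar{\bm\chi}_\ell=\mathbf e$ so that Burer's implicit upper-bound hypothesis is met, stack everything into the displayed $\Sb,\tb,\Ab,\bb$, and invoke Theorem~\ref{thm:burer}. Your explicit remark that the slacks are needed solely to satisfy Burer's hypothesis, and your use of the cyclic trace identity to match the objective, are both spot on.
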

\begin{proof}
Lemma \ref{lem:integer_binary} enables us to reformulate the mixed-integer quadratic program \eqref{eq:quadratic_program} equivalently as the mixed-binary quadratic program
	\begin{equation}
	\label{eq:MIBP}
	\begin{array}{clll}
	Z(\bm x)=&\displaystyle \sup&\norm{\bm A (\bm x) \bm \xi}^2 + \bm b (\bm x)^\top\bm \xi + c (\bm x) \\
	&\st&\displaystyle \bm{\xi}\in\RR_+^K,\;\bm\chi_\ell\in\{0,1\}^{Q}&\forall \ell\in[L]\\
	&&	\bm S\bm{\xi}= \bm t\\
	&&	\displaystyle\xi_\ell=\mathbf v_{Q}^\top \bm{\chi}_\ell&\forall \ell\in[L].
	\end{array}
	\end{equation}
	We now employ Theorem \ref{thm:burer} to derive the equivalent completely positive program for \eqref{eq:MIBP}. To this end, we first bring the above quadratic program into the standard form \eqref{eq:standard_QP}. We introduce the redundant linear constraints $\bm{\chi}_\ell\leq\mathbf e$, $\ell\in[L]$, 
	which are pertinent for the exactness of the reformulation, and we define new auxiliary slack variables $\bm{\eta}_\ell$, $\ell\in[L]$,  to transform these inequalities into the equality constraints $\bm\chi_\ell+\bm{\eta}_\ell=\mathbf e$, $\forall \ell\in[L]$. This yields the equivalent problem
	\begin{equation}
	\label{eq:quad_max_}
	\begin{array}{clll}
		Z(\bm x)=&\displaystyle \sup&\norm{\bm A (\bm x) \bm \xi}^2 + \bm b (\bm x)^\top\bm \xi + c (\bm x) \\
	&\st&\displaystyle \bm{\xi}\in\RR_+^K,\;\bm{\eta}_\ell\in\RR_+^Q,\;\bm\chi_\ell\in\{0,1\}^{Q}&\forall \ell\in[L]\\
	&&	\bm S\bm{\xi}=\bm t\\
	&&	\displaystyle\xi_\ell=\mathbf v_{Q}^\top \bm{\chi}_\ell&\forall \ell\in[L]\\
	&&\bm\chi_\ell+\bm{\eta}_\ell=\mathbf e&\forall \ell\in[L].
	\end{array}
	\end{equation}
	We next define the expanded vector
	\begin{equation*}
	{{\xih}}=\begin{bmatrix}	
	\bm{\chi}_1^\top
	& \cdots & 
	\bm{\chi}_L^\top &  
	\bm{\eta}_1^\top & \cdots &\bm{\eta}_L^\top & \bm{\xi}^\top
	\end{bmatrix}^\top\in\RR_+^{\Kh}
	\end{equation*}
	that comprises all decision variables in \eqref{eq:quad_max_}. Together with the augmented parameters \eqref{eq:expanded_parameters}, we can reformulate \eqref{eq:quad_max_} concisely as the problem
	\begin{equation}
\label{eq:quad_max__}
\begin{array}{clll}
Z(\bm x)=&\displaystyle \sup&\norm{\bm \Ab (\bm x) \xih}^2 + \bm \bb (\bm x)^\top{\xih} + c (\bm x) \\
&\st&\displaystyle {{\xih}}\in\RR_+^{\Kh}\\
&&	\Sb{{\xih}}=\tb\\
&& \xi_{\ell}'\in\{0,1\}&\forall \ell\in \left[LQ\right].
\end{array}
\end{equation}
The mixed-binary quadratic program \eqref{eq:quad_max__} already has the desired standard form \eqref{eq:standard_QP} with inputs $P=\Kh$, $\bm Q=\Ab(\bm x)^\top\Ab(\bm x)$, $\bm r=\bb(\bm x)$, $\bm F=\Sb$, $\bm g=\bm t$, and $\mathcal L=[LQ]$. We may thus apply Theorem \ref{thm:burer} to obtain the equivalent completely positive program \eqref{eq:CPP}. This completes the proof.
\end{proof}
We  remark that in view of the concise representation in Lemma \ref{lem:integer_binary}, the size of the completely positive program \eqref{eq:CPP} remains polynomial in the size of the input data. This completely positive program admits a dual copositive program given by
\begin{equation}
\label{eq:COP}
\begin{array}{clll}
\displaystyle \overline{Z}(\bm x) = &\inf &  c(\bm x) + \tb ^\top \bm \psi + (\tb \circ \tb) ^\top \bm \phi + \tau \\
&\st&\displaystyle \tau \in \RR ,\; \bm \psi,\bm \phi \in\RR^{\Jh},\;\bm \gamma \in\RR^{LQ} \\
&&\begin{bmatrix} \Sb^\top \diag(\bm\phi) \Sb - \Ab(\bm{x}) ^\top \Ab(\bm x) - \diag\left([\bm{\gamma}^\top\;\bm 0^\top]^\top \right)& \frac{1}{2}\left(\Sb^\top \bm\psi - \bb( \bm x) +[\bm{\gamma}^\top\;\bm 0^\top]^\top\right)
\\
\frac{1}{2}\left(\Sb^\top \bm\psi- \bb( \bm x) +[\bm{\gamma}^\top\;\bm 0^\top]^\top\right)^\top & \tau
\end{bmatrix} \succeq_{\mathcal C} \bm 0.
\end{array}
\end{equation}
By weak conic duality, the optimal value of this copositive program constitutes an upper bound on $Z(\bm x)$. 
\begin{prop} For any fixed decision $\bm x\in\mathcal X$ we have
$\overline{Z}(\bm x)\geq Z(\bm x)$.
\end{prop}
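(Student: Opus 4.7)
The statement is a routine consequence of weak conic duality once one recognizes that \eqref{eq:COP} is the Lagrangian dual of the completely positive program \eqref{eq:CPP}. Since Proposition~\ref{prop:CPP} already identifies $Z(\bm x)$ with the optimal value of \eqref{eq:CPP}, the plan reduces to (i) verifying the primal--dual pairing between \eqref{eq:CPP} and \eqref{eq:COP}, and (ii) invoking weak conic duality. No strong duality is needed, so the regularity conditions that are typically the hard part of conic duality proofs can be avoided here.

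To verify the pairing, I would attach Lagrange multipliers to each primal constraint of \eqref{eq:CPP}: a vector $\bm\psi\in\RR^{\Jh}$ to the linear equality $\Sb\xih=\tb$; a vector $\bm\phi\in\RR^{\Jh}$ to the quadratic-type equality $\diag(\Sb\bm\Omega\Sb^\top)=\tb\circ\tb$; a vector $\bm\gamma\in\RR^{LQ}$ to the coupling equalities $\xi_\ell'=\Omega_{\ell\ell}$ for $\ell\in[LQ]$; and a copositive matrix
\begin{equation*}
\begin{bmatrix}\bm M & \bm m\\ \bm m^\top & \tau\end{bmatrix}\in\mathcal C
\end{equation*}
to the conic constraint, using the duality $(\mathcal C^*)^*=\mathcal C$. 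The resulting Lagrangian collects the objective $\tr(\Ab(\bm x)\bm\Omega\Ab(\bm x)^\top)+\bb(\bm x)^\top\xih+c(\bm x)$ together with the multiplier terms, which after regrouping become linear in $\xih$ and $\bm\Omega$ plus the constants $c(\bm x)+\tb^\top\bm\psi+(\tb\circ\tb)^\top\bm\phi+\tau$.

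Taking the supremum of the Lagrangian over $(\xih,\bm\Omega)\in\RR_+^{\Kh}\times\mathbb S_+^{\Kh}$ yields $+\infty$ unless the coefficients of $\xih$ and $\bm\Omega$ vanish identically. Solving these stationarity conditions expresses $\bm m$ and $\bm M$ in closed form, namely
\begin{equation*}
\bm m=\tfrac{1}{2}\bigl(\Sb^\top\bm\psi-\bb(\bm x)+[\bm\gamma^\top\;\bm 0^\top]^\top\bigr),\qquad \bm M=\Sb^\top\diag(\bm\phi)\Sb-\Ab(\bm x)^\top\Ab(\bm x)-\diag([\bm\gamma^\top\;\bm 0^\top]^\top),
\end{equation*}
and substituting these into the copositivity requirement on the block matrix yields exactly the matrix inequality that appears in \eqref{eq:COP}. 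Hence \eqref{eq:COP} is the conic Lagrangian dual of \eqref{eq:CPP}, and by weak conic duality any dual feasible tuple $(\bm\psi,\bm\phi,\bm\gamma,\tau)$ provides an upper bound on the primal value, so $\overline Z(\bm x)\geq Z(\bm x)$.

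The only real obstacle is bookkeeping: one must carefully track the factor of $1/2$ arising from symmetrizing the pairing between the block matrix and its multiplier, and verify that the diagonal multiplier $\bm\gamma$ appears in exactly the upper-left block corresponding to the binary indices $\ell\in[LQ]$. Once these indexing details are handled, the conclusion is immediate from weak duality and requires no further analysis.
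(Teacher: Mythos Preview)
Your proposal is correct and mirrors the paper's own justification, which simply states that \eqref{eq:COP} is the conic dual of \eqref{eq:CPP} and invokes weak conic duality without further detail. One small bookkeeping slip: once you have dualized the completely positive cone constraint, the supremum should be taken over \emph{unconstrained} $(\xih,\bm\Omega)\in\RR^{\Kh}\times\mathbb S^{\Kh}$ (the sign constraints $\xih\ge\bm 0$, $\bm\Omega\succeq\bm 0$ are redundant with the cone membership and need not be retained), which is precisely what yields your ``coefficients vanish identically'' condition.
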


\subsection{A Copositive Reformulation of RQP}
In this section, we demonstrate strong duality for the primal and dual pair \eqref{eq:CPP} and \eqref{eq:COP}, respectively, under the natural boundedness assumption on the uncertainty set $\Xi$. This exactness result enables us to reformulate the RQP~\eqref{eq:RO} equivalently as a copositive program of polynomial size. 
\begin{thm}[Strong Duality]
	\label{thm:strong_duality}
	For any fixed decision $\bm x\in\mathcal X$ we have
	$\overline{Z}(\bm x)=Z(\bm x)$.
\end{thm}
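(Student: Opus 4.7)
The plan is to prove the reverse inequality $\overline{Z}(\bm x)\leq Z(\bm x)$, which combined with the weak duality already established yields the claim. The strategy is to verify Slater's condition on the dual copositive program \eqref{eq:COP} by exhibiting a tuple $(\tau,\bm\psi,\bm\phi,\bm\gamma)$ whose block matrix is strictly copositive---equivalently, lies in the interior of $\mathcal C$---and then to invoke a standard strong duality theorem for conic linear programs. Since the primal CPP \eqref{eq:CPP} is feasible with finite optimal value $Z(\bm x)$ (the inner MIQP attains its maximum over the compact set $\Xi$, and Proposition~\ref{prop:CPP} transfers this attainment to \eqref{eq:CPP}), this will immediately yield $\overline{Z}(\bm x)=Z(\bm x)$.

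The key structural ingredient I would establish first is that boundedness of $\Xi$ forces the recession cone $\mathcal R=\{\bm v\in\RR_+^{\Kh}:\Sb\bm v=\bm 0\}$ to be trivial. Partitioning $\bm v=(\bm\chi_1,\ldots,\bm\chi_L,\bm\eta_1,\ldots,\bm\eta_L,\bm\xi)$ in accordance with \eqref{eq:expanded_parameters}, the last $LQ$ rows of $\Sb\bm v=\bm 0$ read $\bm\chi_\ell+\bm\eta_\ell=\bm 0$, which together with non-negativity forces $\bm\chi_\ell=\bm\eta_\ell=\bm 0$. The middle $L$ rows then give $\xi_\ell=\mathbf v_Q^\top\bm\chi_\ell=0$ for $\ell\in[L]$, so $\bm v$ reduces to a non-negative vector on the continuous components of $\bm\xi$ satisfying $\bm S\bm\xi=\bm 0$. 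For any integer assignment rendering $\Xi$ non-empty, boundedness of $\Xi$ forces the associated continuous LP on those components to be bounded, whose recession cone is exactly this residual set; hence $\mathcal R=\{\bm 0\}$. Compactness of $\{\bm v\geq\bm 0:\|\bm v\|=1\}$ and continuity of $\bm v\mapsto\|\Sb\bm v\|$ then yield $\delta:=\min\{\|\Sb\bm v\|^2:\bm v\geq\bm 0,\|\bm v\|=1\}>0$.

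With this bound, the Slater point is built as follows. Set $\bm\gamma=\bm 0$ and $\bm\psi=\bm 0$, choose $\bm\phi=\phi_0\mathbf e$ with $\phi_0$ large enough that $\phi_0\delta$ strictly exceeds the largest eigenvalue of $\Ab(\bm x)^\top\Ab(\bm x)$, and take $\tau$ large. For any non-zero $\bm v\geq\bm 0$, the top-left block then satisfies
\[
\bm v^\top\Sb^\top\diag(\bm\phi)\Sb\bm v-\|\Ab(\bm x)\bm v\|^2 \;\geq\; \bigl(\phi_0\delta-\lambda_{\max}(\Ab(\bm x)^\top\Ab(\bm x))\bigr)\|\bm v\|^2 \;>\; 0,
\]
so this block is strictly copositive. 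Choosing $\tau$ large enough to dominate the (bounded) off-diagonal term $-\tfrac{1}{2}\bb(\bm x)$ then renders the full $2\times 2$ block matrix strictly copositive, hence an interior point of $\mathcal C$. The conclusion follows from the classical strong duality theorem for conic linear programs.

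The principal technical obstacle is the triviality of $\mathcal R$: boundedness of the mixed-integer set $\Xi$ does not a priori imply boundedness of its LP relaxation, so one must carefully exploit the auxiliary rows of $\Sb$ coming from the binary encoding and the slack equations, together with a fixed-integer-assignment argument, to reduce the question to a bounded continuous LP. All subsequent steps are routine compactness arguments or direct invocations of known conic duality results.
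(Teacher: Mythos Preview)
Your proposal is correct and follows essentially the same route as the paper: you build the identical Slater point $(\bm\gamma,\bm\psi,\bm\phi)=(\bm 0,\bm 0,\phi_0\mathbf e)$ with $\tau$ large, reduce strict copositivity of the top-left block to triviality of the recession cone $\{\bm v\in\RR_+^{\Kh}:\Sb\bm v=\bm 0\}$, and then absorb the off-diagonal term by enlarging $\tau$. The paper packages the last step as a Copositive Schur Complement lemma (Lemma~\ref{lem:copositive_schur}) rather than a direct compactness estimate, and states the recession-cone triviality (Lemma~\ref{lem:SS_copositive}) as an immediate consequence of boundedness of the lifted polytope $\Xih$; your more explicit fixed-integer-slice argument for this point is a welcome clarification, since the paper's ``by construction'' remark glosses over exactly the mixed-integer subtlety you flag.
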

We would like to mention that a similar result is proved in a recent paper (Theorem 8 in \cite{bomze2017fresh}). However, the two proofs are quite different from one another. While the proof in \cite{bomze2017fresh} establishes strong duality by proving the existence of a Slater point for a general copositive program, we show explicitly how to construct a Slater point for the copositive program \eqref{eq:COP} from input parameters. Because of its constructive nature, we believe our proof is interesting on its own and sheds some light on the geometry of the feasible region of~\eqref{eq:COP}.

We note  that the primal completely positive program \eqref{eq:CPP} never has an interior \cite{burer2012copositive}. In order to prove Theorem \ref{thm:strong_duality}, we  construct a Slater point for the dual copositive program \eqref{eq:COP}. The construction of the Slater point for problem \eqref{eq:COP} relies on the following two lemmas. We observe that by construction the boundedness of the uncertainty set $\Xi$ means that the lifted polytope
\begin{equation}
\label{eq:expanded_polytope_}
{\Xih}=\{{{\xih}}\in\RR^{\Kh}:\Sb{\xih}=\tb,\;{{\xih}}\geq\bm 0\}
\end{equation}
 is also bounded. This gives rise to the following lemma on the strict copositivity of the matrix $\Sb^\top\Sb$. 
\begin{lem}
	\label{lem:SS_copositive}
	We have
	$\Sb^\top\Sb\succ_{\mathcal C}\bm 0$.
\end{lem}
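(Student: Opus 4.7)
The plan is to reduce the strict copositivity of $\Sb^\top\Sb$ to a statement about the kernel of $\Sb$ in the nonnegative orthant. Since $\bm y^\top\Sb^\top\Sb\bm y = \|\Sb\bm y\|^2 \geq 0$ automatically, proving $\Sb^\top\Sb \succ_{\mathcal C}\bm 0$ amounts to showing that the only $\bm y \in \RR_+^{\Kh}$ with $\Sb\bm y = \bm 0$ is $\bm y = \bm 0$. This is exactly the statement that the recession cone of the lifted polytope $\Xih$ in \eqref{eq:expanded_polytope_} is trivial, and I plan to derive it from the boundedness of the original uncertainty set $\Xi$.

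Concretely, I would partition an arbitrary $\bm y \in \RR_+^{\Kh}$ conformably with $\xih$ as $\bm y = (\bm\chi_1,\ldots,\bm\chi_L,\bm\eta_1,\ldots,\bm\eta_L,\bm\xi)$ and read off the equations encoded in $\Sb\bm y = \bm 0$. The rows coming from the $\bm\chi_\ell+\bm\eta_\ell = \mathbf e$ block homogenize to $\bm\chi_\ell+\bm\eta_\ell = \bm 0$, which, together with nonnegativity, forces $\bm\chi_\ell = \bm\eta_\ell = \bm 0$ for each $\ell\in[L]$. The rows that encode the binary expansion $\xi_\ell = \mathbf v_Q^\top\bm\chi_\ell$ then give $\xi_\ell = 0$ for $\ell\in[L]$. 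What remains is $\bm S\bm\xi = \bm 0$, $\bm\xi\geq\bm 0$, with the integer-indexed entries already pinned to zero, and I must show this forces $\bm\xi = \bm 0$.

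For that final step I will invoke the boundedness of $\Xi$. Take any $\bm\xi^0 \in \Xi$ (nonempty by assumption) and note that if such a nonzero $\bm\xi$ existed, then $\bm\xi^0 + \alpha\bm\xi$ would still satisfy $\bm S(\bm\xi^0+\alpha\bm\xi) = \bm t$ and $\bm\xi^0+\alpha\bm\xi\geq\bm 0$ for every $\alpha\geq 0$; because $\bm\xi$ vanishes on every integer-constrained coordinate $\ell\in[L]$, the integrality of $\xi_\ell^0+\alpha\xi_\ell$ is automatically preserved for all $\alpha\geq 0$. Hence $\bm\xi^0+\alpha\bm\xi\in\Xi$ for all $\alpha\geq 0$, contradicting the boundedness of $\Xi$. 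Therefore $\bm\xi = \bm 0$, and consequently $\bm y = \bm 0$, which establishes $\Sb^\top\Sb\succ_{\mathcal C}\bm 0$.

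The main obstacle I anticipate is precisely this last step: converting the boundedness of the \emph{mixed-integer} set $\Xi$ into boundedness along a continuous recession direction of $\bm S$. The key observation that removes the difficulty is that the offending recession direction must, after the preceding algebra, have zeros on every integer coordinate, which is exactly what allows an arbitrary nonnegative scalar (rather than only integer) multiple to be added while staying in $\Xi$; this is what we need to contradict boundedness.
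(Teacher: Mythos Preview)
Your proposal is correct and follows the same core idea as the paper: strict copositivity of $\Sb^\top\Sb$ is equivalent to the recession cone $\{\bm y\in\RR_+^{\Kh}:\Sb\bm y=\bm 0\}$ being $\{\bm 0\}$, which in turn follows from the boundedness of $\Xi$. The paper compresses this into two lines by first observing (just before the lemma) that boundedness of $\Xi$ forces the lifted polytope $\Xih$ to be bounded, whereas you unpack this observation explicitly via the block structure of $\Sb$; your extra care in noting that the recession direction vanishes on every integer coordinate---so that adding any nonnegative real multiple preserves membership in the \emph{mixed-integer} set $\Xi$---fills in a detail the paper leaves implicit.
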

\begin{proof}
The boundedness assumption implies that the recession cone of the set $\Xih$ coincides with the point $\bm 0$, that is,
$\{{{\xih}}\in\RR_+^{\Kh}:\Sb{\xih}=\bm 0\}=\{\bm 0\}$. 
Thus, for every ${{\xih}}\geq \bm 0$, ${{\xih}}\neq \bm 0$, we must have $\Sb{{\xih}}\neq\bm 0$, which further implies that ${{\xih}}^\top\Sb^\top\Sb{{\xih}}>0$ for all ${{\xih}}\geq \bm 0$ such that ${{\xih}}\neq \bm 0$. Hence, the matrix $\Sb^\top\Sb$ is strictly copositive. 
\end{proof}
The next lemma, which was proven in \cite[Lemma 4]{hanasusanto2018conic}, constitutes an extension of the Schur complements lemma for matrices with a copositive sub-matrix.  We include the proof here to keep the paper self-contained. 
\begin{lem}[Copositive Schur Complements]
	\label{lem:copositive_schur}
	Consider the symmetric matrix
	\begin{equation*}
	\bm M=\begin{bmatrix}
	\bm P& \bm Q\\
	\bm Q^\top & \bm R
	\end{bmatrix}.
	\end{equation*}
	We then have $\bm M\succ_{\mathcal C}\bm 0$ if $\bm R-\bm Q^\top\bm P^{-1}\bm Q\succ_{\mathcal C}\bm 0$ and $\bm P\succ\bm 0$. 
\end{lem}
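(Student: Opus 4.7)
The plan is to exploit the classical block factorization that defines the Schur complement. Since $\bm P \succ \bm 0$, the matrix $\bm P$ is invertible, and a direct computation shows that
\begin{equation*}
\bm M = \bm U^\top \begin{bmatrix} \bm P & \bm 0 \\ \bm 0 & \bm R - \bm Q^\top \bm P^{-1} \bm Q \end{bmatrix} \bm U, \qquad \text{where} \qquad \bm U = \begin{bmatrix} \mathbb I & \bm P^{-1}\bm Q \\ \bm 0 & \mathbb I \end{bmatrix}.
\end{equation*}
Verifying this identity is routine (multiply out the three blocks), and it reduces the statement to an analysis of the block-diagonal matrix in the middle.

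Next, I would pick an arbitrary nonzero vector $\bm z = [\bm u^\top\ \bm v^\top]^\top \geq \bm 0$ and use the factorization to write
\begin{equation*}
\bm z^\top \bm M \bm z = (\bm u + \bm P^{-1} \bm Q \bm v)^\top \bm P\, (\bm u + \bm P^{-1} \bm Q \bm v) + \bm v^\top\bigl(\bm R - \bm Q^\top \bm P^{-1} \bm Q\bigr)\bm v.
\end{equation*}
I would then split into two cases. If $\bm v \neq \bm 0$, then $\bm v \geq \bm 0$ is a nonzero nonnegative vector, so the hypothesis of strict copositivity of the Schur complement gives that the second term is strictly positive; the first term is nonnegative because $\bm P \succ \bm 0$, and the sum is therefore positive. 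If $\bm v = \bm 0$, then $\bm u \neq \bm 0$ forces the first term to reduce to $\bm u^\top \bm P \bm u > 0$ (again by $\bm P \succ \bm 0$), while the second term vanishes. Either way $\bm z^\top \bm M \bm z > 0$, which is exactly strict copositivity of $\bm M$.

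The only subtle point I would want to flag is why the hypothesis $\bm P \succ \bm 0$ is used in its full positive-definite form rather than in a weaker copositive form: the vector $\bm u + \bm P^{-1}\bm Q \bm v$ need not be nonnegative, since $\bm P^{-1}\bm Q$ can have negative entries, so strict copositivity of $\bm P$ would not suffice to control the first summand. This is what forces the asymmetry in the assumptions between the two blocks and is really the only place the argument requires care; the rest is purely mechanical Schur-complement algebra.
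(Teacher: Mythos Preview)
Your proof is correct and essentially identical to the paper's: both arrive at the decomposition $\bm z^\top \bm M \bm z = (\bm u + \bm P^{-1}\bm Q\bm v)^\top \bm P (\bm u + \bm P^{-1}\bm Q\bm v) + \bm v^\top(\bm R - \bm Q^\top \bm P^{-1}\bm Q)\bm v$ (the paper via completing the square, you via the block factorization) and then run the same two-case analysis on whether $\bm v = \bm 0$. Your closing remark on why $\bm P \succ \bm 0$ rather than $\bm P \succ_{\mathcal C} \bm 0$ is needed is a nice addition that the paper leaves implicit.
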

\begin{proof}
		Consider a non-negative vector $[\bm\xi^\top\;\bm\rho^\top]^\top\in\RR_+^{P+Q}$ satisfying $\mathbf e^\top\bm\xi+\mathbf e^\top\bm\rho=1$. We have
	\begin{equation*}
	\begin{array}{rll}
	[\bm\xi^\top\;\bm\rho^\top]\bm M[\bm\xi^\top\;\bm\rho^\top]^\top&=&\displaystyle\bm\xi^\top\bm P\bm\xi+2\bm\xi^\top\bm Q\bm\rho+\bm\rho^\top\bm R\bm\rho\\
	&=&\displaystyle(\bm\xi+\bm P^{-1}\bm Q\bm\rho)^\top\bm P(\bm\xi+\bm P^{-1}\bm Q\bm\rho)+\bm\rho^\top(\bm R-\bm Q^\top\bm P^{-1}\bm Q)\bm\rho\ \geq\ 0.
	\end{array}
	\end{equation*}
	The final inequality follows from the assumptions $\bm P\succ\bm 0$, $\bm R-\bm Q^\top\bm P^{-1}\bm Q\succ_{\mathcal C}\bm 0$ and $\bm \rho\geq \bm 0$. In fact, the inequality will be strict, which can be shown by considering the following two cases:
	\begin{enumerate}
		\item If $\bm\rho=\bm 0$, then $\mathbf e^\top\bm\xi=1$. Therefore $\bm\xi\neq \bm 0$, which implies that $(\bm\xi+\bm P^{-1}\bm Q\bm\rho)^\top\bm P(\bm\xi+\bm P^{-1}\bm Q\bm\rho)>0$.
		\item If $\bm\rho\neq\bm 0$, then the assumption $\bm R-\bm Q^\top\bm P^{-1}\bm Q\succ_{\mathcal C}\bm 0$ implies that $\bm\rho^\top(\bm R-\bm Q^\top\bm P^{-1}\bm Q)\bm\rho>0$.
	\end{enumerate}
	Therefore, in both cases, by rescaling we have $[\bm\xi^\top\;\bm\rho^\top]\bm M[\bm\xi^\top\;\bm\rho^\top]^\top > 0$ for all $[\bm\xi^\top\;\bm\rho^\top]^\top \in\RR_+^{P+Q}$ such that $[\bm\xi^\top\;\bm\rho^\top]^\top\neq\bm 0$. Hence, $\bm M\succ_{\mathcal C}\bm 0$. 	
\end{proof}

Using Lemmas \ref{lem:SS_copositive} and \ref{lem:copositive_schur}, we are now ready to prove the main strong duality result. 
\begin{proof}[Proof of Theorem \ref{thm:strong_duality}]
	We construct a Slater point $(\tau,\bm{\psi},\bm{\phi},\bm{\gamma})$ for problem \eqref{eq:COP}. Specifically, we set $\bm{\gamma}=\bm 0$, $\bm{\psi}=\bm 0$, and $\bm{\phi}=\rho\mathbf e$ for some $\rho>0$. Problem \eqref{eq:COP} then admits a Slater point if there exist scalars $\rho,\tau>0$, such that
	\begin{equation}
	\label{eq:Slater_matrix}
	\begin{bmatrix} \rho\Sb^\top\Sb - \Ab(\bm{x}) ^\top \Ab(\bm x) & - \frac{1}{2}\bb( \bm x)
		\\
		- \frac{1}{2}\bb( \bm x)^\top & \tau
	\end{bmatrix}\succ_{\mathcal C}\bm 0. 
	\end{equation}
	Lemma \ref{lem:SS_copositive} implies  that for a sufficiently large $\rho$ the matrix $\rho\Sb^\top\Sb - \Ab(\bm{x}) ^\top \Ab(\bm x)$ is strictly copositive. Thus, we can choose a positive $\tau$ to ensure that
	\begin{equation*}
	\rho\Sb^\top\Sb - \Ab(\bm{x}) ^\top \Ab(\bm x) -\frac{1}{4\tau}\bb(\bm x)\bb(\bm x)^\top\succ_{\mathcal C}\bm 0.
	\end{equation*}
	Using Lemma \ref{lem:copositive_schur}, we may conclude that the strict copositivity constraint in \eqref{eq:Slater_matrix} is satisfied by the constructed solution $(\tau,\bm{\psi},\bm{\phi},\bm{\gamma})$. Thus,  problem \eqref{eq:COP} admits a Slater point and  strong duality  indeed holds for the primal and dual pair \eqref{eq:CPP} and \eqref{eq:COP}, respectively. 
\end{proof}
The exactness result portrayed in Theorem \ref{thm:strong_duality} enables us to derive the equivalent copositive programming reformulation for~\eqref{eq:RO}. 
\begin{thm}
	\label{thm:CP_RQCQP}
		The RQP \eqref{eq:RO} is equivalent to the following copositive program. 
		\begin{equation}
		\label{eq:RO_COP}
		\begin{array}{clll}
		\displaystyle\minimize&\displaystyle c(\bm x) + \tb ^\top \bm \psi + (\tb \circ \tb) ^\top \bm \phi + \tau\\
		\subjectto& \displaystyle {\bm x\in\mathcal X},\;
		\tau \in \RR ,\; \bm \psi,\bm \phi \in\RR^{\Jh},\;\bm \gamma \in\RR^{LQ},\;\bm H\in\mathbb S^{\Kh}_+\\
		& \begin{bmatrix}
		\mathbb I & \Ab(\bm{x}) \\
		\Ab(\bm{x})^\top & \bm H
		\end{bmatrix}\succeq\bm 0  \\
		&\begin{bmatrix} \Sb^\top \diag(\bm\phi) \Sb - \bm H - \diag\left([\bm{\gamma}^\top\;\bm 0^\top]^\top\right)& \frac{1}{2}\left(\Sb^\top \bm\psi - \bb( \bm x) +[\bm{\gamma}^\top\;\bm 0^\top]^\top \right)
		\\
		\frac{1}{2}\left(\Sb^\top \bm\psi- \bb( \bm x) +[\bm{\gamma}^\top\;\bm 0^\top]^\top \right)^\top & \tau
		\end{bmatrix} \succeq_{\mathcal C} \bm 0
		\end{array}
		\end{equation}
\end{thm}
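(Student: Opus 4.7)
The plan is to combine the strong duality result of Theorem \ref{thm:strong_duality} with a Schur-complement linearization. The first step is to invoke Theorem \ref{thm:strong_duality}, which states that $Z(\bm x) = \overline{Z}(\bm x)$ for every $\bm x \in \mathcal X$. This lets me substitute the inner worst-case in \eqref{eq:RO} with the copositive dual program \eqref{eq:COP}; since both the outer and inner problems are minimizations, the nested $\min$-$\min$ collapses into a single joint minimization over $\bm x \in \mathcal X$ together with the dual variables $(\tau, \bm\psi, \bm\phi, \bm\gamma)$. The resulting intermediate formulation coincides with \eqref{eq:RO_COP} except that the top-left block of the copositive matrix contains the term $\Ab(\bm x)^\top \Ab(\bm x)$, which is quadratic in $\bm x$.

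The second step removes this quadratic dependence by lifting. I introduce an auxiliary PSD matrix variable $\bm H \in \mathbb S_+^{\Kh}$ intended as an upper bound of $\Ab(\bm x)^\top \Ab(\bm x)$. By the classical Schur complements lemma, and since $\mathbb I \succ \bm 0$, the inequality $\bm H \succeq \Ab(\bm x)^\top \Ab(\bm x)$ is equivalent to
\begin{equation*}
\begin{bmatrix} \mathbb I & \Ab(\bm x) \\ \Ab(\bm x)^\top & \bm H \end{bmatrix} \succeq \bm 0,
\end{equation*}
which is jointly affine in $(\bm x, \bm H)$ because $\Ab(\bm x)$ is affine in $\bm x$.

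The third step verifies that this lifting preserves the optimal value. For any feasible point $(\bm x, \tau, \bm\psi, \bm\phi, \bm\gamma)$ of the intermediate formulation, taking $\bm H = \Ab(\bm x)^\top \Ab(\bm x)$ yields a feasible solution of \eqref{eq:RO_COP} with identical objective (the objective does not involve $\bm H$). Conversely, starting from a feasible point $(\bm x, \tau, \bm\psi, \bm\phi, \bm\gamma, \bm H)$ of \eqref{eq:RO_COP}, the matrix $\bm H - \Ab(\bm x)^\top \Ab(\bm x)$ is PSD and hence copositive; placing it as the top-left block of an otherwise zero block matrix and adding to the copositive matrix of \eqref{eq:RO_COP} preserves copositivity (since $\mathcal C$ is closed under addition and contains $\mathbb S_+$). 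The resulting matrix is precisely the copositive expression of the intermediate formulation, so the projection $(\bm x, \tau, \bm\psi, \bm\phi, \bm\gamma)$ is feasible there with the same objective. Because the substantive analytical work was already done in establishing Theorem \ref{thm:strong_duality}, the present proof reduces to routine Schur-complement lifting, and I do not anticipate any genuine obstacle beyond keeping the block-matrix bookkeeping straight.
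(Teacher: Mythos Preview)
Your proposal is correct and follows essentially the same route as the paper: invoke Theorem~\ref{thm:strong_duality} to replace $Z(\bm x)$ by the copositive dual \eqref{eq:COP}, merge the two minimizations, and then linearize $\Ab(\bm x)^\top\Ab(\bm x)$ via a Schur-complement lifting with an auxiliary matrix $\bm H$. The paper isolates the linearization step as a separate lemma (Lemma~\ref{lem:linearize_A'A}), whose proof is exactly the two directions you spell out inline, so the arguments are identical in substance.
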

The proof of Theorem \ref{thm:CP_RQCQP} relies on the following lemma, which linearizes the quadratic term $\Ab(\bm{x}) ^\top \Ab(\bm x)$ in the left-hand side matrix of problem \eqref{eq:COP}. 
\begin{lem}
	\label{lem:linearize_A'A}
	Let $\bm M\in\mathbb S^R$ be a symmetric matrix  and $\bm A\in\RR^{P\times Q}$ be an arbitrary matrix with $Q\leq R$. Then the copositive inequality
	\begin{equation}
	\label{eq:copositive_A'A}
	\bm M\succeq_{\mathcal C}\begin{bmatrix}
	\bm A^\top\bm A& \bm 0\\
	\bm 0 & \bm 0
	\end{bmatrix}
	\end{equation}
	is satisfied if and only if there exists a positive semidefinite matrix $\bm H\in\mathbb S_+^Q$ such that 
	\begin{equation}
	\label{eq:inequalities_for_H}
	\bm M\succeq_{\mathcal C}\begin{bmatrix}
	\bm H& \bm 0\\
	\bm 0 & \bm 0
	\end{bmatrix}\quad\text{ and }\quad\begin{bmatrix}
	\mathbb I& \bm A\\
	\bm A^\top & \bm H
	\end{bmatrix}\succeq\bm 0. 
	\end{equation}	
\end{lem}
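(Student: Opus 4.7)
\medskip

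\noindent \textbf{Proof plan.}
The plan is to decouple the lemma into two observations: an elementary Schur-complement reformulation of the PSD inequality, and a monotonicity of the copositive order under addition of a PSD matrix. First I would rewrite the PSD constraint on the right in \eqref{eq:inequalities_for_H} in a more useful form. Since $\mathbb I\succ\bm 0$, the generalized Schur complement condition shows that
\begin{equation*}
\begin{bmatrix}
\mathbb I & \bm A\\
\bm A^\top & \bm H
\end{bmatrix}\succeq \bm 0 \iff \bm H \succeq \bm A^\top\bm A.
\end{equation*}
Thus the lemma reduces to the claim that \eqref{eq:copositive_A'A} holds if and only if there exists $\bm H\in\mathbb S_+^Q$ with $\bm H\succeq \bm A^\top\bm A$ and $\bm M\succeq_{\mathcal C}\bigl[\begin{smallmatrix}\bm H & \bm 0\\ \bm 0 & \bm 0\end{smallmatrix}\bigr]$.

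For the \emph{only if} direction, I would simply take $\bm H=\bm A^\top\bm A$. This choice is automatically positive semidefinite, trivially satisfies $\bm H\succeq \bm A^\top\bm A$, and turns the right-hand copositive inequality in \eqref{eq:inequalities_for_H} into the assumed inequality \eqref{eq:copositive_A'A}. Equivalently, $\bigl[\begin{smallmatrix}\mathbb I & \bm A\\ \bm A^\top & \bm A^\top\bm A\end{smallmatrix}\bigr]=\bigl[\begin{smallmatrix}\mathbb I\\ \bm A^\top\end{smallmatrix}\bigr]\bigl[\mathbb I\;\;\bm A\bigr]\succeq\bm 0$, so the PSD condition is met.

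For the \emph{if} direction, the key observation is that $\bm H\succeq\bm A^\top\bm A$ implies $\bm H-\bm A^\top\bm A\succeq\bm 0$, hence $\bm H-\bm A^\top\bm A$ is in particular copositive; padding with zeros preserves copositivity, so
\begin{equation*}
\begin{bmatrix}
\bm H & \bm 0\\ \bm 0 & \bm 0
\end{bmatrix}\succeq_{\mathcal C} \begin{bmatrix}
\bm A^\top\bm A & \bm 0\\ \bm 0 & \bm 0
\end{bmatrix}.
\end{equation*}
Combining this with the assumed $\bm M\succeq_{\mathcal C}\bigl[\begin{smallmatrix}\bm H & \bm 0\\ \bm 0 & \bm 0\end{smallmatrix}\bigr]$ and using transitivity of the copositive order yields \eqref{eq:copositive_A'A}, completing the proof.

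I do not anticipate any serious obstacle: the entire argument consists of (i) a Schur-complement rewriting that removes $\bm A$ from the nonlinear position, and (ii) the fact that the copositive cone contains the PSD cone, which lets us upgrade a PSD difference into a copositive difference. The mild subtlety, if any, is making sure to invoke the PSD-to-copositive containment in the correct direction and to pad the block with zeros without changing copositivity; both are immediate from the definition $\bm v^\top\bm N\bm v\geq 0$ for all $\bm v\geq\bm 0$.
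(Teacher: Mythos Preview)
Your proposal is correct and follows essentially the same approach as the paper: choose $\bm H=\bm A^\top\bm A$ for the ``only if'' direction, and for the ``if'' direction use the Schur complement to extract $\bm H\succeq\bm A^\top\bm A$, then invoke the containment $\mathbb S_+\subseteq\mathcal C$ together with transitivity of $\succeq_{\mathcal C}$.
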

\begin{proof}
	The only if statement is satisfied immediately by setting $\bm H=\bm A^\top\bm A$. To prove the converse statement, assume that there exists such a positive semidefinite matrix $\bm H\in\mathbb S_+^Q$. Then by the Schur complement the semidefinite inequality in \eqref{eq:inequalities_for_H} implies that $\bm H\succeq\bm A^\top\bm A$ and, \emph{a fortiori}, $\bm H\succeq_{\mathcal C}\bm A^\top\bm A$. Combining this with the copositive inequality in \eqref{eq:inequalities_for_H} then yields \eqref{eq:copositive_A'A}. Thus, the claim follows. 
\end{proof}

\begin{proof}[Proof of Theorem \ref{thm:CP_RQCQP}]
Applying Theorem \ref{thm:strong_duality}, we may replace the objective function of \eqref{eq:RO} with the corresponding copositive reformulation, we thus find that problem \eqref{eq:RO} is equivalent to
	\begin{equation*}
	\begin{array}{clll}
		\displaystyle\minimize&\displaystyle c(\bm x) + \tb ^\top \bm \psi + (\tb \circ \tb) ^\top \bm \phi + \tau\\
		\subjectto& \displaystyle {\bm x\in\mathcal X},\;
		\tau \in \RR ,\; \bm \psi,\bm \phi \in\RR^{\Jh},\;\bm \gamma \in\RR^{LQ}\\
		&\begin{bmatrix} \Sb^\top \diag(\bm\phi) \Sb - \Ab(\bm{x}) ^\top \Ab(\bm x) - \diag\left([\bm{\gamma}^\top\;\bm 0^\top]^\top \right)& \frac{1}{2}\left(\Sb^\top \bm\psi - \bb( \bm x) +[\bm{\gamma}^\top\;\bm 0^\top]^\top \right)
			\\
			\frac{1}{2}\left(\Sb^\top \bm\psi- \bb( \bm x) +[\bm{\gamma}^\top\;\bm 0^\top]^\top\right)^\top & \tau
		\end{bmatrix} \succeq_{\mathcal C} \bm 0. &
	\end{array}
\end{equation*}
Next, we apply Lemma \ref{lem:linearize_A'A} to linearize the quadratic terms $\Ab(\bm{x}) ^\top \Ab(\bm x)$, which gives rise to the desired copositive program \eqref{eq:RO_COP}. This completes the proof. 
\end{proof}

\section{Conservative Semidefinite Programming Approximation}
\label{sec:SDP_formulation}
The copositive program~\eqref{eq:RO_COP} is intractable due to its equivalence with generic RQPs over a polyhedral uncertainty set~\cite{ben1998robust}. In the copositive reformulation, however, all the difficulty of the original problem \eqref{eq:RO} is shifted into the copositive cone $\mathcal C$, which has been well-studied in the literature. Specifically, there exists a hierarchy of 
increasingly tight semidefinite representable inner approximations that converge in finitely many iterations to $\mathcal C$  \cite{parrilo2000structured, bomze2002solving, DKP02:copositive,lasserre2009convexity}. The simplest of these approximations is given by the cone
\begin{equation*}
\mathcal C^0=\left\{\bm M\in\mathbb S^K:\bm M=\bm P+\bm N,\;\bm P\succeq\bm 0,\;\bm N\geq \bm 0 \right\},
\end{equation*}
which contains all symmetric matrices that can be decomposed into a sum of positive semidefinite and non-negative matrices. 
For dimensions $K\leq 4$ it can be shown that $\mathcal C^0=\mathcal C$  \cite{diananda62:copositive}, while for $K>4$, $C^0$ is a strict subset of $\mathcal C$. 

Replacing the cone $\mathcal C$ in \eqref{eq:RO_COP} with the inner approximation $\mathcal C^0$ gives rise to a tractable conservative approximation for the RQP~\eqref{eq:RO}. In this case, however, the resulting optimization problem might have no interior or even become infeasible as the Slater point constructed in Theorem \ref{thm:strong_duality} can fail to be a Slater point to the restricted problem.  Indeed, the strict copositivity of the matrix $\Sb^\top\Sb$ is in general insufficient to ensure that the matrix is also strictly positive definite. To remedy this shortcoming, we suggest the following simple modification to the primal completely positive formulation of $Z(\bm x)$ in \eqref{eq:CPP}. Specifically,  we assume that there exists a non-degenerate ellipsoid centered at $\bm c\in\RR_+^{\Kh}$ with radius $r\in\RR_{++}$ and shape parameter $\bm Q\in\mathbb S_{++}^{\Kh}$ given by \begin{equation*}
\mathcal B(r,\bm Q,\bm c)=\left\{{{\xih}}\in\RR^{\Kh}:\|\bm Q({{\xih}}-\bm c)\|\leq r\right\}
\end{equation*} that contains the lifted set $\Xih$ in \eqref{eq:expanded_polytope_}.
We then consider the following augmented completely positive programming reformulation for the maximization problem \eqref{eq:quadratic_program}.
\begin{equation}
\label{eq:CPP_ellipsoid}
\begin{array}{clll}
\displaystyle  Z(\bm x) = &\displaystyle \sup&\tr\left(\Ab(\bm x) \bm \Omega \Ab(\bm x) ^\top\right) + \bb(\bm x)^\top{ \xih} + c (\bm x)  \\
&\st&\displaystyle {{\xih}}\in\RR_+^{\Kh},\;\bm{\Omega} \in\mathbb S_+^{\Kh}\\
&& \Sb { \xih} = \tb,\;\diag(\Sb\bm \Omega\Sb^\top)=\tb\circ\tb\\
&& {\xi}_{\ell}'=\Omega_{\ell \ell} \quad \forall \ell\in\left[LQ\right]\\
&& \tr\left(\bm Q \bm \Omega \bm Q ^\top\right) - 2\bm c^\top\bm Q^\top\bm Q{{\xih}} + \bm c^\top\bm Q^\top\bm Q\bm c\leq r^2\\
&&\begin{bmatrix}  \bm\Omega & {\xih}\\
{ \xih}^\top & 1\\
\end{bmatrix}\succeq_{\mathcal C^*} \bm 0
\end{array}
\end{equation}
Here, we have added the redundant constraint $\tr\left(\bm Q \bm \Omega \bm Q ^\top\right) - 2\bm c^\top\bm Q^\top\bm Q{{\xih}} + \bm c^\top\bm Q^\top\bm Q\bm c\leq r^2$ to \eqref{eq:CPP}, which arises from linearizing the quadratic constraint 
\begin{equation*}
\|\bm Q({{\xih}}-\bm c)\|^2=\tr\left(\bm Q { \xih}{{\xih}}^\top \bm Q ^\top\right) - 2\bm c^\top\bm Q^\top\bm Q{{\xih}} + \bm c^\top\bm Q^\top\bm Q\bm c\leq r^2,
\end{equation*}
where we have set $\bm \Omega={ \xih}{{\xih}}^\top$.
The dual of the augmented problem \eqref{eq:CPP_ellipsoid} is given by the following copositive program.
\begin{equation}
\label{eq:COP_ellipsoid}
\begin{array}{clll}
\overline{Z}(\bm x)=&\inf & \displaystyle c(\bm x) + \tb ^\top \bm \psi + (\tb \circ \tb) ^\top \bm \phi +\lambda r^2-\lambda\|\bm Q\bm c\|^2+ \tau \\
&\st&\displaystyle \tau \in \RR ,\; \lambda\in\RR_+,\;\bm \psi,\bm \phi \in\RR^{\Jh},\;\bm \gamma \in\RR^{LQ},\;\bm h\in\RR^\Kh \\
&&\begin{bmatrix} \lambda\bm Q^\top\bm Q+\Sb^\top \diag(\bm\phi) \Sb - \Ab(\bm{x}) ^\top \Ab(\bm x) - \diag\left([\bm{\gamma}^\top\;\bm 0^\top]^\top\right)& \frac{1}{2}\bm h\\
\frac{1}{2}\bm h^\top & \tau \end{bmatrix} \succeq_{\mathcal C} \bm 0\\
&&\bm h=\Sb^\top \bm\psi - \bb( \bm x) +[\bm{\gamma}^\top\;\bm 0^\top]^\top -2\lambda\bm Q^\top\bm Q\bm c
\end{array}
\end{equation}
Note that we have $Z(\bm x)=\overline{Z}(\bm x)$ since all the new additional terms are redundant for the original reformulations. Nevertheless, since the ellipsoid $\mathcal B(r,\bm Q,\bm c)$ is non-degenerate, we find that the matrix $\bm Q^\top\bm Q$ is positive definite. We can thus set all eigenvalues of the scaled matrix $\lambda\bm Q^\top\bm Q$ to any arbitrarily large positive values by controlling the scalar $\lambda\in\RR_+$. This suggests that  replacing the cone $\mathcal C$ with its inner approximation $\mathcal C^0$ in~\eqref{eq:COP_ellipsoid} will always yield a problem with a Slater point. 

Apart from helping us prove the existence of a Slater point, adding an ellipsoidal constraint to the description of the uncertainty set can also be of help numerically. Although, the constraint is redundant for the exact problem, it might not be redundant for the conservative approximation obtained by replacing $\mathcal C$ with $\mathcal C^0$. Adding the constraint results in an additional variable $\lambda$ in the SDP approximation, which can improve the objective value. Ideally, we would like the volume of the ellipsoid to be as small as possible to get more improvement. However, determining the parameters of the ellipsoid having minimum volume that encloses the set ${\Xi}$ is NP-hard. A feasible ellipsoid that can be generated tractably is \mbox{$\{\bm\xi\in\RR^{K}: \|\bm\xi\|\leq \norm{\bm r}\}$}, where
\begin{equation*}
r_k = \sup_{\bm\xi\in{\Xi}}\;{\xi}_k, \quad \forall k \in [K].
\end{equation*}
Note that the parameter $\bm r$ of the ellipsoid can be determined by solving $K$ linear programs. Depending on the specific uncertainty set at hand, it might be possible to find other tighter ellipsoidal approximations. 

\subsection{Comparison with the Approximate \slemma Method}
\label{sec:S-lemma}
Next, we show that solving the problem by replacing $\mathcal C$ with the simplest inner approximation $\mathcal C^0$ is better than the approximate \slemma method. Since the latter is only valid in the case of continuous uncertain parameters, we restrict the discussion to the case where the bounded uncertainty set contains no integral terms and is given by the polytope
$\Xi=\left\{\bm \xi\in\RR_+^K: \bm S\bm \xi= \bm t \right\}$.
Here, the extended parameters \eqref{eq:expanded_parameters} simplify to
\begin{equation*}
\label{eq:expanded_parameters_polytope}
\begin{array}{c}
\Sb=
\bm S,\quad 
\tb=
\bm t,
\quad\Ab(\bm x)=
\bm A(\bm x) 
,\quad \text{and}\quad\bb(\bm x)=
\bm b(\bm x),
\end{array}
\end{equation*}
while the maximization problem \eqref{eq:quadratic_program} reduces to
\begin{equation}
\label{eq:quad_max_over_polytope}
{Z}(\bm x) =\sup_{{{\bm\xi}}\in\Xi}\norm{\bm A(\bm x)\bm\xi}^2 + \bm b (\bm x)^\top{ \bm\xi} + c (\bm x).
\end{equation}
The copositive programming reformulation \eqref{eq:COP_ellipsoid} can then be simplified to
\begin{equation}
\label{eq:COP_polytope}
\begin{array}{clll}
\displaystyle \overline{Z}(\bm x) = &\inf &  c(\bm x) + \bm t ^\top \bm \psi + (\bm t \circ \bm t) ^\top \bm \phi +\lambda r^2-\lambda\|\bm Q\bm c\|^2+ \tau\\
&\st&\displaystyle \tau \in \RR ,\;\lambda\in\RR_+ ,\;\bm \psi,\bm \phi \in\RR^{J}\\
&&\begin{bmatrix} \lambda\bm Q^\top\bm Q+\bm S^\top \diag(\bm\phi) \bm S - \bm A(\bm{x}) ^\top \bm A(\bm x) & \frac{1}{2}\left(\bm S^\top \bm\psi - \bm b(\bm x) -2\lambda\bm Q^\top\bm Q\bm c \right)
\\
\frac{1}{2}\left(\bm S^\top \bm\psi- \bm b( \bm x) -2\lambda\bm Q^\top\bm Q\bm c\right)^\top & \tau
\end{bmatrix} \succeq_{\mathcal C} \bm 0.
\end{array}
\end{equation}
Replacing the cone $\mathcal C$ in \eqref{eq:COP_polytope} with its inner approximation $\mathcal C^0$, we obtain a tractable SDP reformulation whose optimal value $\overline{Z}^{\mathcal C_0}(\bm x)$  constitutes an  upper bound on ${Z}(\bm x)$. Alternatively, we describe the approximate \slemma method below, which provides a different conservative SDP approximation for \eqref{eq:quad_max_over_polytope}.
\begin{prop}[Approximate \slemma Method \cite{ben1998robust}]
	Assume that the uncertainty set is a bounded polytope  and  there is an ellipsoid centered at $\bm c\in\RR_+^{K}$ of radius $r$ given by $\mathcal B(r,\bm Q,\bm c)=\{{{\bm\xi}}\in\RR^{K}:\|\bm Q({{\bm\xi}}-\bm c)\|\leq r\}$ that contains the set~${\Xi}$. Then, for any fixed $\bm x\in\mathcal X$, the maximization problem \eqref{eq:quadratic_program} is upper bounded by the optimal value of the following semidefinite program:
	\begin{equation}
	\label{eq:S_lemma}
	\begin{array}{rlll}
	\overline Z^\mathcal S(\bm x)=&\displaystyle \inf&\displaystyle\; c (\bm x) +\bm t^\top\bm \theta+\rho r^2-\rho \|\bm Q\bm{c}\|^2+\kappa\\
	&\displaystyle\st & \kappa\in\RR,\;\rho\in\RR_+,\;\bm{\theta}\in\RR^J,\;\bm{\eta}\in\RR_+^{J}\\
	&&\begin{bmatrix}\rho\bm Q^\top\bm Q- \bm A(\bm{x}) ^\top \bm A(\bm x) & \frac{1}{2}\left(\bm S^\top \bm\theta - \bm b( \bm x) -\bm{\eta}-2\rho\bm Q^
	\top\bm Q\bm c \right)
	\\
	\frac{1}{2}\left(\bm S^\top \bm\theta - \bm b( \bm x) -\bm{\eta}-2\rho\bm Q^
	\top\bm Q\bm c\right)^\top & \kappa
	\end{bmatrix} \succeq \bm 0. \\
	\end{array}
	\end{equation}
\end{prop}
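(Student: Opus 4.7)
The plan is to establish the inequality $Z(\bm x)\le\overline Z^{\mathcal S}(\bm x)$ by a standard Lagrangian/weak-duality argument: every feasible dual solution $(\kappa,\rho,\bm\theta,\bm\eta)$ to the SDP \eqref{eq:S_lemma} will be shown to furnish a uniform upper bound on the quadratic objective over $\Xi$, and taking the infimum over all such dual variables then yields the claim. The key observation is that the LMI in \eqref{eq:S_lemma} is precisely the condition that the associated $(K+1)\times(K+1)$ Gram matrix is positive semidefinite, which is equivalent to the scalar quadratic inequality
\begin{equation*}
\bm\xi^\top\bigl(\rho\bm Q^\top\bm Q-\bm A(\bm x)^\top\bm A(\bm x)\bigr)\bm\xi+\bigl(\bm S^\top\bm\theta-\bm b(\bm x)-\bm\eta-2\rho\bm Q^\top\bm Q\bm c\bigr)^\top\bm\xi+\kappa\ge 0
\end{equation*}
holding for every $\bm\xi\in\RR^K$ (not just those in $\Xi$). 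This is the workhorse that will let us absorb the quadratic term $\|\bm A(\bm x)\bm\xi\|^2$ into the ``$\rho\|\bm Q\bm\xi\|^2$ slack'' obtained from the ellipsoidal containment.

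The second step is to specialize the above inequality to an arbitrary $\bm\xi\in\Xi$ and exploit each structural constraint in turn. Since $\bm S\bm\xi=\bm t$, the linear term $\bm\theta^\top\bm S\bm\xi$ collapses to the constant $\bm\theta^\top\bm t$. Since $\bm\xi\ge\bm 0$ and $\bm\eta\ge\bm 0$, the term $-\bm\eta^\top\bm\xi$ is non-positive and may be dropped to weaken the inequality in the right direction. Finally, using $\|\bm Q(\bm\xi-\bm c)\|^2\le r^2$ together with $\rho\ge 0$ yields
\begin{equation*}
\rho\|\bm Q\bm\xi\|^2-2\rho(\bm Q^\top\bm Q\bm c)^\top\bm\xi\ \le\ \rho r^2-\rho\|\bm Q\bm c\|^2.
\end{equation*}
Plugging these three facts into the rearranged quadratic inequality leads, after adding $c(\bm x)$ to both sides, to
\begin{equation*}
\|\bm A(\bm x)\bm\xi\|^2+\bm b(\bm x)^\top\bm\xi+c(\bm x)\ \le\ c(\bm x)+\bm t^\top\bm\theta+\rho r^2-\rho\|\bm Q\bm c\|^2+\kappa,
\end{equation*}
which is exactly the objective value of the dual candidate. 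Taking the supremum over $\bm\xi\in\Xi$ on the left and then the infimum over $(\kappa,\rho,\bm\theta,\bm\eta)$ on the right yields $Z(\bm x)\le\overline Z^{\mathcal S}(\bm x)$.

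There is no real obstacle here beyond careful bookkeeping; the argument is essentially the classical weak-duality derivation of the approximate $\mathcal S$-lemma, where the equality, non-negativity, and ellipsoidal constraints are each dualized using multipliers $\bm\theta\in\RR^J$, $\bm\eta\ge\bm 0$, and $\rho\ge 0$, and the LMI constraint encodes the resulting quadratic-over-$\RR^K$ bound via a Schur-type reformulation. The one subtlety worth flagging is to ensure the sign conventions on $\bm\eta$ (entering with $-\bm\eta^\top\bm\xi$ in the quadratic) and on the ellipsoidal slack match up so that each relaxation step moves in the same direction; a minor indexing point is that $\bm\eta$ must be $K$-dimensional (matching $\bm b(\bm x)$ and $\bm S^\top\bm\theta$) rather than $J$-dimensional as the display may suggest, which is the only thing one should verify before writing out the short calculation.
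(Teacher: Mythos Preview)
Your proposal is correct and is essentially the same Lagrangian/weak-duality argument as the paper's proof, just run in the ``verify'' direction (start from a feasible dual point and check it upper-bounds the primal) rather than the ``derive'' direction (dualize the primal constraints and take inf-sup). Your observation about the dimension of $\bm\eta$ is also spot on: the paper's own proof uses $\bm\eta\in\RR_+^{K}$, so the $\RR_+^{J}$ in the statement is indeed a typo.
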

\begin{proof}
	The quadratic maximization problem in \eqref{eq:quad_max_over_polytope} can be equivalently reformulated as
	\begin{equation*}
	\begin{array}{clll}
	Z(\bm x)=&\displaystyle \sup&\norm{\bm A(\bm x) \bm\xi}^2 + \bm b(\bm x)^\top{\bm\xi} + c (\bm x) \\
	&\st&\displaystyle {{\bm{\xi}}}\in\RR_+^{K}\\
	&&	\bm S \bm\xi = \bm t\\
	&& \|\bm Q({{\bm{\xi}}}-\bm c)\|^2\leq r^2. 
	\end{array}
	\end{equation*}
	Here, the last constraint is added without loss generality since $\Xi\subseteq\mathcal B(r,\bm Q,\bm c)$. 
	Reformulating the problem into its Lagrangian form then yields
	\begin{equation*}
	\begin{array}{rcll}
	&Z(\bm x)\\
	=&\displaystyle \sup_{\bm\xi} &\displaystyle\inf_{\bm{\eta}\geq\bm 0,\rho\geq 0,\bm{\theta}}\;\norm{\bm A(\bm x) \bm{\xi}}^2 + \bm b(\bm x)^\top{ \bm{\xi}} + c (\bm x) + \bm t^\top\bm \theta - \bm{\xi}^\top\bm S^\top\bm \theta+\bm{\xi}^\top\bm{\eta}+\rho r^2-\rho \|\bm Q({{\bm{\xi}}}-\bm c)\|^2\\
	\leq &\displaystyle\inf_{\bm{\eta}\geq\bm 0,\rho\geq 0,\bm{\theta}}&\displaystyle\; \quad\sup_{\bm{\xi}}\quad\;\;\norm{\bm A (\bm x) { \bm{\xi}}}^2 + \bm b (\bm x)^\top \bm{\xi} + c (\bm x) +\bm t^\top\bm \theta -\bm{\xi}^\top\bm S^\top\bm \theta+\bm{\xi}^\top\bm{\eta}+\rho r^2-\rho \|\bm Q(\bm{\xi}-\bm c)\|^2\\
	=&\displaystyle\inf_{\bm{\eta}\geq\bm 0,\rho\geq 0,\bm{\theta}}&\displaystyle\; c (\bm x) +\bm t^\top\bm \theta+\rho r^2-\rho\|\bm Q\bm{c}\|^2\\
	&&\qquad\qquad\displaystyle+\sup_{\bm{\xi}}\left(\norm{\bm A(\bm x) \bm{\xi}}^2 + \bm b(\bm x)^\top\bm{\xi} -\bm\xi^\top\bm S^\top\bm \theta+ \bm\xi^\top\bm{\eta}-\rho \|\bm Q{{\bm{\xi}}}\|^2+2\rho\bm{\xi}^\top\bm Q^\top\bm Q\bm c\right),
	\end{array}
	\end{equation*}
	where the inequality follows from the weak Lagrangian duality. We next introduce an epigraphical variable~$\kappa$ that shifts the supremum in the objective function into the constraint. We have
	\begin{equation*}
	\begin{array}{rlll}
	Z(\bm x)\leq&\displaystyle \inf&\displaystyle c (\bm x) +\bm t^\top\bm \theta+\rho r^2-\rho\|\bm Q\bm{c}\|^2+\kappa\\
	&\displaystyle\st & \bm{\theta}\in\RR^J,\;\bm{\eta}\in\RR_+^{K},\;\rho\in\RR_+,\;\kappa\in\RR\\
	&&\displaystyle\sup_{\bm \xi}\left(\norm{\bm A(\bm x) \bm{\xi}}^2 + \bm b (\bm x)^\top{ \bm{\xi}} -{{\bm{\xi}}}^\top\bm S^\top\bm \theta+ \bm{\xi}^\top\bm{\eta}-\rho \|\bm Q\bm{\xi}\|^2+2\rho\bm{\xi}^\top\bm Q^\top\bm Q\bm c\right)\leq \kappa. 
	\end{array}
	\end{equation*}
	Reformulating the semi-infinite constraint as a semidefinite constraint then yields the desired reformulation~\eqref{eq:S_lemma}. This completes the proof. 
\end{proof}

The next proposition shows that the approximation resulting from replacing the copositive cone $\mathcal C$  in \eqref{eq:COP_polytope} with its coarsest inner approximation $\mathcal C^0$ is stronger than the  state-of-art approximate \slemma method. 
\begin{prop}
	\label{prop:vs_S_lemma}
	The following relation holds.
	\begin{equation*}
	{Z}(\bm x)=\overline{Z}(\bm x)\leq\overline{Z}^{\mathcal C_0}(\bm x)\leq\overline{Z}^{\mathcal S}(\bm x)
	\end{equation*}
\end{prop}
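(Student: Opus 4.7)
The three claims decouple naturally. For $Z(\bm x)=\overline{Z}(\bm x)$, the plan is to invoke the strong duality established in Theorem~\ref{thm:strong_duality}, specialized to the purely continuous case (so that $\Sb = \bm S$, $\Ab(\bm x) = \bm A(\bm x)$, etc.) and augmented with the redundant ellipsoidal constraint: since every $\bm\xi\in\Xi\subseteq\mathcal B(r,\bm Q,\bm c)$ satisfies the added inequality, the constructed Slater point of Theorem~\ref{thm:strong_duality} remains a Slater point of \eqref{eq:COP_polytope}, so strong duality carries over. The first inequality $\overline{Z}(\bm x)\leq \overline{Z}^{\mathcal C_0}(\bm x)$ is then immediate, because $\mathcal C^0\subseteq \mathcal C$ means the $\mathcal C^0$-relaxation optimizes over a smaller feasible set of the minimization problem \eqref{eq:COP_polytope}, which can only weakly raise the optimal value.

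The main content is the last inequality $\overline{Z}^{\mathcal C_0}(\bm x)\leq \overline{Z}^{\mathcal S}(\bm x)$. My plan is to give a direct feasibility-preserving map: starting from an arbitrary feasible tuple $(\kappa,\rho,\bm\theta,\bm\eta)$ of the S-lemma SDP \eqref{eq:S_lemma}, I will construct a feasible tuple of the $\mathcal C^0$-version of \eqref{eq:COP_polytope} whose objective value equals that of the S-lemma point. Taking infima on both sides then yields the desired inequality. Concretely, I propose the assignment
\begin{equation*}
\tau:=\kappa,\qquad \lambda:=\rho,\qquad \bm\psi:=\bm\theta,\qquad \bm\phi:=\bm 0.
\end{equation*}
With $\bm\phi=\bm 0$, the objectives of the two programs match term by term (the $(\bm t\circ\bm t)^\top\bm\phi$ contribution vanishes, while the remaining four terms $c(\bm x)+\bm t^\top\bm\psi+\lambda r^2-\lambda\|\bm Q\bm c\|^2+\tau$ coincide with their S-lemma counterparts under the identification above).

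It then remains to verify that the $\mathcal C^0$-constraint holds. Writing $M_S\succeq\bm 0$ for the matrix in \eqref{eq:S_lemma} and $M_C$ for the matrix in the $\mathcal C^0$-version of \eqref{eq:COP_polytope} under my assignment, a direct computation gives
\begin{equation*}
M_C - M_S \;=\; \frac{1}{2}\begin{bmatrix} \bm 0 & \bm\eta \\ \bm\eta^\top & 0 \end{bmatrix} \;\geq\; \bm 0,
\end{equation*}
where the inequality is entrywise and follows from $\bm\eta\geq\bm 0$. Hence $M_C = M_S + N$ with $M_S\succeq\bm 0$ and $N\geq\bm 0$, which is precisely the decomposition witnessing $M_C\in\mathcal C^0$. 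I do not expect any genuine obstacle in this argument; the key insight is simply that the nonnegative slack $\bm\eta$ in the S-lemma plays exactly the role of the nonnegative summand in a $\mathcal C^0$-decomposition. The reason the $\mathcal C^0$-relaxation is provably stronger is that it additionally offers the multiplier $\bm\phi$ (here set to zero) together with the ability to place the nonnegative summand $N$ on \emph{any} pattern of entries, whereas the S-lemma can only inject nonnegativity into the off-diagonal column via $\bm\eta$.
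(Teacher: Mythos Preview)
Your proposal is correct and follows essentially the same approach as the paper: establish the equality via strong duality, the first inequality via $\mathcal C^0\subseteq\mathcal C$, and the key second inequality by mapping any feasible $(\kappa,\rho,\bm\theta,\bm\eta)$ of \eqref{eq:S_lemma} to the $\mathcal C^0$-problem via $\tau=\kappa$, $\lambda=\rho$, $\bm\psi=\bm\theta$, $\bm\phi=\bm 0$, with the nonnegative part of the $\mathcal C^0$-decomposition supplied by $\tfrac{1}{2}\bm\eta$. Your explicit computation $M_C-M_S=\tfrac{1}{2}\begin{bmatrix}\bm 0&\bm\eta\\ \bm\eta^\top&0\end{bmatrix}$ is exactly the verification the paper has in mind (and in fact makes the factor $\tfrac{1}{2}$ explicit).
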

\begin{proof}
	The equality and the first inequality hold by construction. To prove the second inequality, we consider the following semidefinite program that arises from replacing the cone $\mathcal C$ with the inner approximation~$\mathcal C^0$ in~\eqref{eq:COP_polytope}. 
	\begin{equation}
	\label{eq:COP_polytope_inner}
	\begin{array}{clll}
	\displaystyle \overline{Z}^{\mathcal C_0}(\bm x) = &\inf &  c(\bm x) + \bm t^\top \bm \psi + (\bm t \circ \bm t) ^\top \bm \phi +\lambda r^2-\lambda\|\bm Q\bm c\|^2+ \tau\\
	&\st&\displaystyle \tau \in \RR ,\; ,\;\lambda,h\in\RR_+ ,\;\bm \psi,\bm \phi \in\RR^{J},\;\bm F\in\RR_+^{K\times K},\;\bm g\in\RR_+^{K}\\
	&&\begin{bmatrix} \lambda\bm Q^\top\bm Q+\bm S^\top \diag(\bm\phi) \bm S - \bm A(\bm{x}) ^\top \bm A(\bm x) & \frac{1}{2}\left(\bm S^\top \bm\psi - \bm b(\bm x) -2\lambda\bm Q^\top\bm Q\bm c \right)
	\\
	\frac{1}{2}\left(\bm S^\top \bm\psi- \bm b(\bm x) -2\lambda\bm Q^\top\bm Q\bm c\right)^\top & \tau
	\end{bmatrix} \succeq\begin{bmatrix}\bm F&\bm g\\\bm g^\top & h
	\end{bmatrix}
	\end{array}
	\end{equation}
	Next, we show that any feasible solution $(\kappa,\rho,\bm{\theta},\bm{\eta})$ to \eqref{eq:S_lemma} can be used to construct a feasible solution $(\tau,\lambda,h,\bm \psi,\bm \phi,\bm F,\;\bm g)$ to \eqref{eq:COP_polytope_inner} with the same objective value. Specifically, we set 
	$\tau=\kappa$, $\lambda=\rho$,  $h=0$, $\bm \psi=\bm{\theta}$, $\bm{\phi}=\bm{0}$, $\bm F=\bm 0$, and $\bm g=\bm{\eta}$. The feasibility of the solution $(\kappa,\rho,\bm{\theta},\bm{\eta})$  in \eqref{eq:S_lemma} then implies that the constructed solution  $(\tau,\lambda,h,\bm \psi,\bm \phi,\bm F,\;\bm g)$ is also feasible in \eqref{eq:COP_polytope_inner}. One can verify that these solutions give rise to the same objective function value for the respective problems. Thus, the claim follows. 
\end{proof}

Next, we demonstrate that the inequality in $\overline{Z}^{0}(\bm x)\leq\overline{Z}^{\mathcal S}(\bm x)$ in Proposition \ref{prop:vs_S_lemma} can often be strict. This affirms that the proposed  SDP approximation \eqref{eq:COP_polytope_inner} is indeed stronger than the approximate \slemma method. 

\begin{ex} 
	Consider the following quadratic maximization problem:
	\begin{equation}
	\label{eq:simple_quad_exa}
	\begin{array}{clll}
	\displaystyle Z(\bm x) = &\sup &  \xi_{1}^2 \\
	&\st&\displaystyle  \bm \xi \in \RR_+^2\\
	&& 2\xi_{1} + \xi_{2} = 2.
	\end{array}
	\end{equation}
	A simple analysis shows that $Z(\bm x)=1$, which is attained at the solution $(\xi_1,\xi_2)=(1,0)$.
	The problem \eqref{eq:simple_quad_exa} constitutes an instance of problem \eqref{eq:quad_max_over_polytope} with the parameterizations
	\begin{equation*}
	\bm A(\bm x)=\begin{bmatrix}
	1 & 0 
	\end{bmatrix},\quad \bm b(\bm x)=\bm 0,\quad \textup{and}\quad c(\bm x)= 0.
	\end{equation*}
	Here, the uncertainty set is given by the polytope $\Xi=\{\bm{\xi}\in\RR_+^2:2\xi_1+\xi_2=2\}$, which corresponds to the inputs $\bm S=[2\;1]$ and $\bm t=2$.  Replacing the cone $\mathcal C$ with its inner approximation $\mathcal C^0$ in the copositive programming reformulation of \eqref{eq:simple_quad_exa}, we find that the resulting semidefinite program yields the same optimal objective value of $\overline Z^{\mathcal C_0}(\bm x)=1$. Meanwhile, the corresponding approximate \slemma method yields an optimal objective value $Z^{\mathcal S}(\bm x)=4$. Thus, while the SDP approximation of the copositive program~\eqref{eq:COP_polytope} is tight, the approximate \slemma generates an inferior objective value for the simple instance~\eqref{eq:simple_quad_exa}. 
\end{ex}

\section{Extensions}
\label{sec:extensions}
In this section, we discuss several extensions to the RQP \eqref{eq:RO} which are also amenable to exact copositive programming reformulation. In Section \ref{extend:two_stage}, we study two-stage robust optimization with mixed-integer uncertainty set where the objective is quadratic in the first- and the second-stage decision variables. In Section \ref{extend:rqcqp}, we develop an extension to the case when the model has robust quadratic constraints. Finally, in Section \ref{extend:nonconvex_obj}, we discuss the case where the objective function contains quadratic terms which are not convex in the uncertain parameter vector $\bm \xi$.

\subsection{Two-Stage Robust Quadratic Optimization}
\label{extend:two_stage}
In this section, we study the two-stage robust quadratic optimization problems of the form
\begin{equation}
\label{eq:RO_two_stage}
\begin{array}{clll}
\displaystyle\minimize&\displaystyle\sup_{\bm{\xi}\in\Xi}\norm{\bm A(\bm x) \bm \xi}^2 +  \bm b(\bm x)^\top\bm \xi + c (\bm x)+ \mathcal R(\bm x,\bm{\xi})\\
\subjectto& \displaystyle {\bm x\in\mathcal X}.
\end{array}
\end{equation}
Here, for any fixed decision $\bm x\in\mathcal X$ and uncertain parameter realization $\bm{\xi}\in\Xi$, the second-stage cost $\mathcal R(\bm x,\bm{\xi})$ coincides with the optimal value of the convex quadratic program given by
 \begin{equation}
 \label{eq:recourse}
 \begin{array}{clll}
\mathcal  R(\bm x,\bm{\xi})=& \displaystyle\inf&\displaystyle\|\bm P\bm y\|^2+\left(\bm R\bm{\xi}+\bm r\right)^\top\bm y\\
 &\st& \displaystyle \bm y\in\RR^{D_2}\\
 &&\displaystyle\bm T(\bm x)\bm{\xi}+\bm h(\bm x)\leq \bm W\bm y,
 \end{array}
 \end{equation}
 where $\bm T(\bm x):\mathcal X\rightarrow\RR^{T\times K}$ and $\bm h(\bm x):\mathcal X\rightarrow\RR^T$ are matrix- and vector-valued affine functions, respectively. 
 \begin{ex}[Support Vector Machines with Noisy Labels] Consider the following soft-margin support vector machines (SVM) model for data classification.
 \begin{equation}
 \label{eq:SVM}
 	\begin{array}{clll}
 \minimize & \displaystyle\lambda \|\bm w\|^2+\sum_{n\in[N]}\max\left\{0,1-\hat \xi_n(\bm w^\top\hat{\bm{\chi}_n}-w_0)\right\}\\
 \subjectto & \bm w\in\RR^K,\; w_0\in\RR
 \end{array}
 \end{equation}
 Here, for every index $n\in[N]$, the vector $\hat{\bm{\chi}}_n\in\RR^K$ is a data point that has been labeled as $\hat \xi_n\in\{-1,1\}$. The objective of  problem \eqref{eq:SVM} is to find a hyperplane $\{\bm{\chi}\in\RR^K:\bm w^\top\bm{\chi}=w_0\}$ 
 that separates all points labeled  $+1$ with the ones labeled $-1$. If the hyperplane satisfies $\hat \xi_n(\bm w^\top\hat{\bm{\chi}}_n-w_0)> 1$, $n\in[N]$, then the data points  are linearly separable. In practice, however, these data points may not be linearly separable. We thus seek the best linear separator that minimizes the number of incorrect classifications. This non-convex objective is captured by employing the hinge loss term $\sum_{n\in[N]}\max\left\{0,1-\hat \xi_n(\bm w^\top\hat{\bm{\chi}}_n-w_0)\right\}$  in  \eqref{eq:SVM} as a convex surrogate. Here, the term $\lambda\|\bm w\|^2$ in the objective function constitutes a regularizer for the coefficient~$\bm w$.  
 
 If the labels $\{\hat \xi_n\}_{n\in[N]}$ are erroneous, then one could envisage a robust optimization model that seeks the best linear separator in view of the most adverse realization of the labels. To this end, we assume that the vector of labels $\bm \xi$ is only known to  reside in a prescribed binary uncertainty set $\Xi\subseteq \{-1,1\}^N$. Then an SVM model that is robust against uncertainty in the labels can be formulated as
  \begin{equation*}
 \begin{array}{clll}
 \minimize & \displaystyle\lambda \|\bm w\|^2+\sup_{\bm{\xi}\in\Xi} \mathcal R(\bm w,w_0,\bm{\xi})\\ 
 \subjectto & \bm w\in\RR^K,\; w_0\in\RR,
 \end{array}
 \end{equation*}
 where 
 \begin{equation*}
 \begin{array}{clll}
 \mathcal  R(\bm w,w_0,\bm{\xi})=& \displaystyle\inf&\displaystyle\mathbf e^\top\bm y\\
 &\st& \displaystyle \bm y\in\RR_+^{N}\\
 &&\displaystyle y_n\geq 1-{\xi_n}(\bm w^\top\hat{\bm\chi}_n-w_0)&\forall n\in[N]. 
 \end{array}
 \end{equation*}
 This problem constitutes an instance of \eqref{eq:RO_two_stage} with the decision vector $\bm x=(\bm w,w_0)$, and the input parameters 
 \begin{equation*}
 \begin{array}{l}
 \bm A(\bm x)=\bm 0,\quad\bm b(\bm x)=\bm 0,\quad c(\bm x)=\lambda\|\bm w\|^2,\quad\bm P=\bm 0,\;\quad\bm R=\bm 0,\quad\bm r=\mathbf e,\\
\bm T(\bm x) = -\diag\left(\begin{bmatrix}
\bm w^\top\hat{\bm{\chi}}_1\\
\vdots\\
\bm w^\top\hat{\bm{\chi}}_N
\end{bmatrix}\right)-w_0\mathbb I,\quad\bm h(\bm x)=\mathbf e,\quad\text{and}\quad\bm W=\mathbb I. 
 \end{array}
 \end{equation*}
 \end{ex}

 The exactness result portrayed in Theorems \ref{thm:strong_duality} and \ref{thm:CP_RQCQP} can be extended to the two-stage robust optimization problem~\eqref{eq:RO_two_stage}. Specifically, if the problem has a \emph{complete recourse}\footnote{The two-stage problem \eqref{eq:RO_two_stage} has complete recourse if there exists $\bm y^+\in\RR^{D_2}$ with $\bm W\bm y^+>\bm 0$, which implies that the second-stage subproblem is feasible for every $\bm x\in\RR^{D_1}$ and $\bm{\xi}\in\RR^K$.} then, by employing Theorem \ref{thm:strong_duality} and extending the techniques developed in \cite[Theorem 4]{hanasusanto2018conic}, the two-stage problem \eqref{eq:RO_two_stage} can be reformulated as a copositive program of polynomial size.
 
 \begin{thm}
 	\label{thm:COP_two_stage}
 Assume that  $\bm P$ has full column rank.
  Then the two-stage robust optimization problem \eqref{eq:RO_two_stage} is equivalent to the copositive program
 \begin{equation}
 \label{eq:RO_COP_two_stage}
 \begin{array}{clll}
 \displaystyle\minimize&\displaystyle c(\bm x) -\frac{1}{4}\bm r^\top(\bm P^\top\bm P)^{-1}\bm r+ \tb ^\top \bm \psi + (\tb \circ \tb) ^\top \bm \phi + \tau\\
 \subjectto& \displaystyle {\bm x\in\mathcal X},\;
 \tau \in \RR ,\; \bm \psi,\bm \phi \in\RR^{\Jh},\;\bm \gamma \in\RR^{LQ},\;\bm H\in\mathbb S^{\Kh}_+ \\
 & \begin{bmatrix}
 \mathbb I & \Ab(\bm{x}) \\
 \Ab(\bm{x})^\top & \bm H
 \end{bmatrix}\succeq\bm 0 \\
 &\begin{bmatrix} \Sb^\top \diag(\bm\phi) \Sb - \bm H -\bm{\mathcal P}(\bm x)- \diag\left([\bm{\gamma}^\top\;\bm 0^\top]^\top\right)& \frac{1}{2}\left(\Sb^\top \bm\psi - \bb( \bm x) +[\bm{\gamma}^\top\;\bm 0^\top]^\top \right)
 \\[4mm]
 \frac{1}{2}\left(\Sb^\top \bm\psi- \bb( \bm x) +[\bm{\gamma}^\top\;\bm 0^\top]^\top\right)^\top & \tau
 \end{bmatrix} \succeq_{\mathcal C} \bm 0,
 \end{array}
 \end{equation}
 where 
 \begin{equation*}
 \begin{array}{c}
 \Sb=\begin{bmatrix}
 \bm 0 & \cdots & \bm 0 &  \bm 0 & \cdots & \bm 0 & \bm S &\bm 0\\
 -\mathbf v_{Q}^\top & \cdots & \bm 0^\top & \bm 0^\top & \cdots & \bm 0^\top & \mathbf e_1^\top &\bm 0^\top\\
 \vdots & \ddots & \vdots & \vdots & \ddots & \vdots & \vdots  & \vdots \\
 \bm 0^\top & \cdots &  -\mathbf v_{Q}^\top & \bm 0^\top & \cdots & \bm 0^\top & \mathbf e_L^\top &\bm 0^\top\\
 \mathbb I & \cdots & \bm 0 &\mathbb I & \cdots & \bm 0& \bm 0  &\bm 0\\
 \vdots & \ddots & \vdots & \vdots & \ddots & \vdots & \vdots &  \vdots \\
 \bm 0 & \cdots & \mathbb I &\bm 0 & \cdots & \mathbb I & \bm 0  &\bm 0\\
 \end{bmatrix}\in\RR^{\Jh\times \Kh},\quad 
 \tb=\begin{bmatrix}
 \bm t \\ 
 0\\
 \vdots \\
 0\\
 \mathbf e\\
 \vdots \\
 \mathbf e\\
 \end{bmatrix}\in\RR^{\Jh},\\
 \bm{\mathcal P}(\bm x)=\begin{bmatrix}
 \bm 0 & \cdots & \bm 0 &  \bm 0 & \cdots & \bm 0 & \bm 0 &  \bm 0\\
 \vdots & \ddots & \vdots & \vdots & \ddots & \vdots & \vdots  & \vdots \\
 \bm 0 & \cdots & \bm 0 &  \bm 0 & \cdots & \bm 0 & \bm 0  & \bm 0\\
 \bm 0 & \cdots & \bm 0 &  \bm 0 & \cdots & \bm 0 & -\frac{1}{4}\bm R^\top(\bm P^\top\bm P)^{-1}\bm R & \frac{1}{2}\left(\bm T(\bm x)+\frac{1}{2}\bm W(\bm P^\top\bm P)^{-1}\bm R\right)^\top\\
 \bm 0 & \cdots & \bm 0 &  \bm 0 & \cdots & \bm 0 & \frac{1}{2}\left(\bm T(\bm x)+\frac{1}{2}\bm W(\bm P^\top\bm P)^{-1}\bm R\right)& -\frac{1}{4}\bm W(\bm P^\top\bm P)^{-1}\bm W^\top\\
 \end{bmatrix}\in\mathbb S^{\Kh},
 \end{array}
 \end{equation*}
 \begin{equation*}
 \begin{array}{c}
 \quad\quad\Ab(\bm x)=\begin{bmatrix}
 \bm 0&\cdots & \bm 0 & \bm 0&\cdots & \bm 0 & \bm A(\bm x) & \bm 0
 \end{bmatrix}\in\RR^{M\times \Kh},\quad \text{ and }\\\bb(\bm x)=\begin{bmatrix}
 \bm 0^\top&\cdots & \bm 0^\top & \bm 0^\top&\cdots & \bm 0^\top & (\bm b(\bm x)-\frac{1}{2}\bm R^\top(\bm P^\top\bm P)^{-1}\bm r)^\top & (\bm h(\bm x)-\frac{1}{2}\bm W(\bm P^\top\bm P)^{-1}\bm r)^\top
 \end{bmatrix}^\top\in\RR^{\Kh},
 \end{array}
 \end{equation*}
 with
 \begin{equation*}
 \Jh=LQ+J+L\quad\text{and}\quad\Kh=2LQ+K+T.
 \end{equation*}
 \end{thm}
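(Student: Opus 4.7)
The plan is to reduce \eqref{eq:RO_two_stage} to the one-stage framework of Theorem~\ref{thm:CP_RQCQP} by Lagrangian-dualizing the second-stage quadratic program. Since $\bm P$ has full column rank the inner problem \eqref{eq:recourse} is strictly convex in $\bm y$, and complete recourse supplies a Slater point for its primal feasibility, so standard convex-QP strong duality applies for every fixed $(\bm x,\bm\xi)$. Explicitly minimizing the Lagrangian over $\bm y$ via $\bm y^\star=\tfrac12(\bm P^\top\bm P)^{-1}(\bm W^\top\bm\lambda-\bm R\bm\xi-\bm r)$ gives
\begin{equation*}
\mathcal R(\bm x,\bm\xi)=\sup_{\bm\lambda\geq \bm 0}\;L^\star(\bm x,\bm\xi,\bm\lambda),\quad L^\star=-\tfrac14(\bm W^\top\bm\lambda-\bm R\bm\xi-\bm r)^\top(\bm P^\top\bm P)^{-1}(\bm W^\top\bm\lambda-\bm R\bm\xi-\bm r)+\bm\lambda^\top(\bm T(\bm x)\bm\xi+\bm h(\bm x)).
\end{equation*}
Substituting this into the worst-case objective of \eqref{eq:RO_two_stage} and swapping the two suprema collapses the inner problem into a single quadratic maximization in the joint variable $(\bm\xi,\bm\lambda)$ over the augmented set $\Xi\times\RR_+^T$.

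Expanding $L^\star$ and regrouping, the constant part produces the $-\tfrac14\bm r^\top(\bm P^\top\bm P)^{-1}\bm r$ offset in the objective of \eqref{eq:RO_COP_two_stage}, the linear parts yield exactly the augmented cost vector $\bb(\bm x)$ (with the $(\bm P^\top\bm P)^{-1}\bm r$-corrections attached to $\bm b(\bm x)$ and $\bm h(\bm x)$), and the purely quadratic piece furnishes $\bm{\mathcal P}(\bm x)$ with its $-\tfrac14\bm R^\top(\bm P^\top\bm P)^{-1}\bm R$ and $-\tfrac14\bm W(\bm P^\top\bm P)^{-1}\bm W^\top$ diagonal blocks and $\tfrac12(\bm T(\bm x)+\tfrac12\bm W(\bm P^\top\bm P)^{-1}\bm R)$ cross block. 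Appending $\bm\lambda$ as additional continuous non-negative entries at the end of the lifting vector $\xih$, and padding the $\Sb,\tb$ of Proposition~\ref{prop:CPP} with a zero column (since $\bm\lambda$ satisfies no linear equality), produces the $\Sb,\tb,\Ab(\bm x),\bb(\bm x)$ stated in the theorem. The inner maximization then becomes an instance of \eqref{eq:quadratic_program} whose total quadratic cost $\tr\bigl((\Ab(\bm x)^\top\Ab(\bm x)+\bm{\mathcal P}(\bm x))\bm\Omega\bigr)+\bb(\bm x)^\top\xih+\text{const}$ differs from the one-stage cost only by the extra $\bm{\mathcal P}(\bm x)$-term. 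I would then replay the derivation of Section~\ref{sec:conic reformulation} verbatim: pass to the completely positive primal through Burer's Theorem~\ref{thm:burer}, dualize to a copositive program using Theorem~\ref{thm:strong_duality}, and apply Lemma~\ref{lem:linearize_A'A} only to the $\Ab(\bm x)^\top\Ab(\bm x)$ piece, while the decision-dependent but $\bm\Omega$-independent matrix $\bm{\mathcal P}(\bm x)$ rides along inside the copositive LMI. This produces \eqref{eq:RO_COP_two_stage} line by line.

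The principal obstacle, and the reason the statement references \cite[Theorem~4]{hanasusanto2018conic}, is that $\Xi\times\RR_+^T$ is unbounded in the $\bm\lambda$-directions, so neither Burer's theorem nor the strict-copositivity argument behind Lemma~\ref{lem:SS_copositive} is available off the shelf. The resolution exploits complete recourse: $L^\star$ is strictly concave in $\bm\lambda$ along every direction with $\bm W^\top\bm\lambda\neq \bm 0$, and the existence of $\bm y^+$ with $\bm W\bm y^+>\bm 0$ rules out every other recession direction of the dual feasible region, so the supremum over $\bm\lambda$ is attained at a uniformly bounded $\bm\lambda^\star$. One can therefore append a redundant big-$M$ inequality $\mathbf e^\top\bm\lambda\leq M$ to restore boundedness of the augmented uncertainty set; this is the step I expect to be the most delicate. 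After this truncation Theorem~\ref{thm:strong_duality} applies to the enlarged problem, and the closing task is to verify that the associated big-$M$ multiplier can be taken to vanish in the limit (equivalently, that a Slater point for the enlarged dual can be constructed independently of $M$), so that the final copositive formulation depends only on the original data and matches \eqref{eq:RO_COP_two_stage} exactly.
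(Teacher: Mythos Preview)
Your approach matches the paper's: dualize the strictly convex recourse QP (strong duality via complete recourse), merge the two suprema into a single quadratic maximization over $(\bm\xi,\bm\theta)\in\Xi\times\RR_+^T$, identify the resulting constant, linear, and quadratic pieces with the stated $\bb(\bm x)$ and $\bm{\mathcal P}(\bm x)$, and then replay Section~\ref{sec:conic reformulation}. You are in fact more explicit than the paper's own proof, which simply asserts that the resulting maximization ``is amenable to an exact completely positive programming reformulation similar to the one derived in Proposition~\ref{prop:CPP}'' and then invokes Theorems~\ref{thm:strong_duality} and~\ref{thm:CP_RQCQP}, deferring the unboundedness obstacle you correctly flagged to \cite[Theorem~4]{hanasusanto2018conic} without further comment.
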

\begin{proof}
	Since $\bm P$ has full column rank, the matrix $\bm P^\top\bm{P}$ is positive definite. Thus, for any fixed $\bm x\in\mathcal X$ and $\bm{\xi}\in\Xi$, the recourse problem \eqref{eq:recourse} admits a dual quadratic program given by
 \begin{equation}
\label{eq:recourse_dual}
\begin{array}{clll}
\mathcal  R(\bm x,\bm{\xi})=& \displaystyle\sup&\displaystyle-\frac{1}{4}\left((\bm W^\top\bm{\theta}-\bm R\bm{\xi}-\bm r)^\top(\bm P^\top\bm P)^{-1}(\bm W^\top\bm{\theta}-\bm R\bm{\xi}-\bm r)\right)+\bm h(\bm x)^\top\bm{\theta}+\bm{\xi}^\top\bm T(\bm x)^\top\bm \theta\\
&\st& \displaystyle \bm \theta\in\RR_+^T.
\end{array}
\end{equation}
Strong duality holds as the two-stage problem \eqref{eq:RO_two_stage} has complete recourse. Substituting the dual formulation~\eqref{eq:recourse_dual} into the objective of \eqref{eq:RO_two_stage} yields
\begin{equation*}
\begin{array}{clll}
&\displaystyle\sup_{\bm{\xi}\in\Xi}\norm{\bm A (\bm x) \bm \xi}^2 +  \bm b(\bm x)^\top\bm \xi + c (\bm x)+ \mathcal R(\bm x,\bm{\xi})\\
=&\displaystyle\sup_{\bm{\xi}\in\Xi,\bm{\theta}\in\RR_+^T}\norm{\bm A(\bm x) \bm \xi}^2 +  \bm b(\bm x)^\top\bm \xi + c (\bm x)-\frac{1}{4}\left((\bm W^\top\bm{\theta}-\bm R\bm{\xi}-\bm r)^\top(\bm P^\top\bm P)^{-1}(\bm W^\top\bm{\theta}-\bm R\bm{\xi}-\bm r)\right)\\
&\quad\quad\quad\quad\quad\quad+\bm h(\bm x)^\top\bm{\theta}+\bm{\xi}^\top\bm T(\bm x)\bm \theta.
\end{array}
\end{equation*}	
Thus, for any fixed $\bm x\in\mathcal X$, the objective value of the two-stage problem \eqref{eq:RO_two_stage} coincides with the optimal value of a quadratic maximization problem, which is amenable to an exact completely positive programming reformulation similar to the one derived in Proposition \ref{prop:CPP}. We can then follow the same steps taken in the proofs of Theorems \ref{thm:strong_duality} and \ref{thm:CP_RQCQP} to obtain the equivalent copositive program \eqref{eq:RO_COP_two_stage}. This completes the proof. 
%
%
%
\end{proof}
 \begin{rem}
 The assumption that $\bm P$ has full column rank in Theorem \ref{thm:COP_two_stage} can be relaxed. If $\bm P$ does not have full column rank then the symmetric matrix  $\bm P^\top\bm P$ is not positive definite but admits the eigendecomposition 
 	$\bm P^\top\bm P=\bm U\bm \Lambda\bm U^{-1}$,
 	where $\bm U$ is an orthogonal matrix whose columns are the eigenvectors of $\bm P^\top\bm P$, while $\bm{\Lambda}$ is a diagonal matrix with the eigenvalues of   $\bm P^\top\bm P$ on its main diagonal. 
 		We assume without loss of generality that the matrix $\bm\Lambda$ has the block diagonal form
 		\begin{equation*}
 		\begin{bmatrix}
 			\bm\Lambda_+ & \bm 0\\
 			\bm 0 & \bm 0
 		\end{bmatrix},
 		\end{equation*}
 		where $\bm{\Lambda}_+$ is a diagonal matrix whose main diagonal comprises the non-zero eigenvalues of $\bm P^\top\bm P$. 
 		Next, by using the constructed eigendecomposition and performing the change of variable $\bm z\leftarrow \bm U^{-1}\bm y$, we can reformulate the recourse problem \eqref{eq:recourse} equivalently as 
 		 \begin{equation*}
 		\label{eq:recourse_}
 		\begin{array}{clll}
 		\mathcal  R(\bm x,\bm{\xi})=& \displaystyle\inf&\displaystyle\bm z_+^\top\bm\Lambda_+\bm z_++\left(\bm R\bm{\xi}+\bm r\right)^\top\bm U_+\bm z_++\left(\bm R\bm{\xi}+\bm r\right)^\top\bm U_0\bm z_0\\
 		&\st& \displaystyle (\bm z_+,\bm z_0)\in\RR^{D_2}\\
 		&&\displaystyle\bm T(\bm x)\bm{\xi}+\bm h(\bm x)\leq \bm W\bm U_+\bm z_++\bm W\bm U_0\bm z_0,
 		\end{array}
 		\end{equation*}
 		where $\bm U=[\bm U_+\;\;\bm U_0]$ and $\bm z = [\bm z_+^\top\;\;\bm z_0^\top]^\top$. The dual of this problem is given by the following quadratic program with a linear constraint system:
 \begin{equation*}
\label{eq:recourse_dual_}
\begin{array}{clll}
\mathcal  R(\bm x,\bm{\xi})=& \displaystyle\sup&\displaystyle-\frac{1}{4}\left((\bm W^\top\bm{\theta}-\bm R\bm{\xi}-\bm r)^\top\bm U_+^\top\bm\Lambda_+^{-1}\bm U_+(\bm W^\top\bm{\theta}-\bm R\bm{\xi}-\bm r)\right)+\bm h(\bm x)^\top\bm{\theta}+\bm{\xi}^\top\bm T(\bm x)^\top\bm \theta\\
&\st& \displaystyle \bm \theta\in\RR_+^T\\
& & \displaystyle \bm U_0^\top\left(\bm R\bm{\xi}+\bm r\right)=\bm U_0^\top\bm W^\top\bm \theta.
\end{array}
\end{equation*}
We can then repeat the same steps in the proof of Theorem \ref{thm:COP_two_stage} to obtain an equivalent copositive programming reformulation. We omit this result for the sake of brevity. 
 \end{rem}

\subsection{Robust Quadratically Constrained Quadratic Programming (RQCQP)}
\label{extend:rqcqp}
The setting that we consider can be extended to the case where, in addition to the robust quadratic objective function, there are several robust quadratic constraints of the form
\begin{equation}
	\label{eq:wc_cons}
	\sup_{\bm{\xi}\in\Xi}\left\{\norm{\bm A_i (\bm x) \bm \xi}^2 + \bm b_i (\bm x)^\top\bm \xi + c_i (\bm x)\right\} \leq 0 \qquad\forall i\in[I]. 
\end{equation}
In this case, the goal is to find a decision $\bm x \in \mathcal X$ which minimizes the worst-case objective function, while ensuring that the quadratic constraints are satisfied for all possible uncertain parameter vectors in $\Xi$. 

For every $ i \in [I]$, we define $\Ab_i(\bm x)$ and $\bb_i(\bm x)$ similarly to the definitions of the extended parameters $\Ab(\bm x)$ and $\bb(\bm x)$ in \eqref{eq:expanded_parameters}. By applying Theorem~\ref{thm:strong_duality}, the quadratic maximization problem in the $i$-th constraint of \eqref{eq:wc_cons} can be replaced with a copositive minimization problem, which yields the constraint
\begin{equation*}
\hspace{-2mm}\begin{array}{clll}
\quad\quad \;0\geq &\inf &  c_i(\bm x) + \tb ^\top \bm \psi_i + (\tb \circ \tb) ^\top \bm \phi_i + \tau_i \\
&\st&\displaystyle \tau_i \in \RR ,\; \bm \psi_i,\bm \phi_i \in\RR^{\Jh},\;\bm \gamma_i \in\RR^{LQ} \\
&&\begin{bmatrix} \Sb^\top \diag(\bm\phi_i) \Sb - \Ab_i(\bm{x}) ^\top \Ab_i(\bm x) - \diag\left([\bm{\gamma}_i^\top\;\bm 0^\top]^\top\right)& \frac{1}{2}\left(\Sb^\top \bm\psi_i - \bb_i( \bm x) +[\bm{\gamma}_i^\top\;\bm 0^\top]^\top \right)
\\
\frac{1}{2}\left(\Sb^\top \bm\psi_i- \bb_i( \bm x) +[\bm{\gamma}_i^\top\;\bm 0^\top]^\top\right)^\top & \tau_i
\end{bmatrix} \succeq_{\mathcal C} \bm 0.
\end{array}\\
\end{equation*}
The constraint is satisfied if and only if there exist decision variables $\tau_i \in \RR$, $\bm \psi_i$, $\bm \phi_i \in\RR^{\Jh}$, and  $\bm \gamma_i \in\RR^{LQ}$ such that the constraint system 
\begin{equation}
\label{eq:cons_copos_refor}
\hspace{-2mm}\begin{array}{clll}
& c_i(\bm x) + \tb ^\top \bm \psi_i + (\tb \circ \tb) ^\top \bm \phi_i + \tau_i \leq 0,\\
&\begin{bmatrix} \Sb^\top \diag(\bm\phi_i) \Sb - \Ab_i(\bm{x}) ^\top \Ab_i(\bm x) - \diag\left([\bm{\gamma}^\top\;\bm 0^\top]^\top\right)& \frac{1}{2}\left(\Sb^\top \bm\psi_i - \bb_i( \bm x) +[\bm{\gamma}_i^\top\;\bm 0^\top]^\top \right)
\\
\frac{1}{2}\left(\Sb^\top \bm\psi_i- \bb_i( \bm x) +[\bm{\gamma}_i^\top\;\bm 0^\top]^\top \right)^\top & \tau_i
\end{bmatrix} \succeq_{\mathcal C} \bm 0
\end{array}
\end{equation}
is satisfied. Therefore the $i$-th constraint of \eqref{eq:wc_cons} can be replaced by the constraint system \eqref{eq:cons_copos_refor}. The procedure for linearization of the quadratic terms $\Ab_i(\bm{x}) ^\top \Ab_i(\bm x)$ is analogous to the method presented in Theorem~\ref{thm:CP_RQCQP}.

\subsection{Non-Convex Terms in the Objective Function}
\label{extend:nonconvex_obj}

All exactness results in this paper extend immediately to the setting where the objective function in \eqref{eq:RO} involves non-convex quadratic terms in the uncertainty $\bm{\xi}$. Specifically, we consider the objective function
\begin{equation*}
Z(\bm x)=\sup_{\bm{\xi}\in\Xi}\norm{\bm A (\bm x) \bm \xi}^2 + \bm{\xi}^\top\bm D(\bm x)\bm{\xi}+ \bm b (\bm x)^\top\bm \xi + c (\bm x),
\end{equation*}
where $\bm D(\bm x):\mathcal X\rightarrow \mathbb S^{K}$ is a matrix-valued affine function of $\bm x$. We can still use Theorem \ref{thm:burer} to reformulate $Z(\bm x)$ as the optimal value of a copositive program. By following the steps of Proposition \ref{prop:CPP} and Theorem~\ref{thm:CP_RQCQP}, the copositive programming reformulation is obtained by replacing the last constraint in \eqref{eq:RO_COP} with the copositive constraint
\begin{equation*}
\begin{array}{clll}
&\begin{bmatrix} \Sb^\top \diag(\bm\phi) \Sb - \bm H -\bm{\mathcal D}(\bm x)- \diag\left([\bm{\gamma}^\top\;\bm 0^\top]^\top\right)& \frac{1}{2}\left(\Sb^\top \bm\psi - \bb( \bm x) +[\bm{\gamma}^\top\;\bm 0^\top]^\top \right)
\\
\frac{1}{2}\left(\Sb^\top \bm\psi- \bb( \bm x) +[\bm{\gamma}^\top\;\bm 0^\top]^\top\right)^\top & \tau
\end{bmatrix} \succeq_{\mathcal C} \bm 0,
\end{array}
\end{equation*}
where 
\begin{equation*}
\begin{array}{cc}
\bm{\mathcal D}(\bm x)=\begin{bmatrix}
\bm 0 & \cdots & \bm 0 &  \bm 0 & \cdots & \bm 0 & \bm 0\\
\vdots & \ddots & \vdots & \vdots & \ddots & \vdots & \vdots  \\
\bm 0 & \cdots & \bm 0 &  \bm 0 & \cdots & \bm 0 & \bm 0 \\
\bm 0 & \cdots & \bm 0 &  \bm 0 & \cdots & \bm 0 & \bm{D}(\bm x)\\
\end{bmatrix}\in\mathbb S^{\Kh}.
\end{array}
\end{equation*}
We omit the details for the sake of brevity.

\section{Numerical Experiments} 
\label{sec:experiments}
In this section, we assess the performance of the SDP approximations presented in Section~\ref{sec:SDP_formulation}. All optimization problems are solved using the YALMIP interface \cite{yalmip} on a 16-core 3.4 GHz computer with 32 GB RAM. We use MOSEK 8.1 to solve SDP formulations, and CPLEX 12.8 to solve integer programs and non-convex quadratic programs.

\subsection{Least Squares}
\label{sec:least_squares}
The classical least squares problem seeks an approximate solution $\bm x$ to an overdetermined linear system $\bm A\bm x=\bm b$ which minimizes the \emph{residual} $\|\bm A\bm x-\bm b\|^2$. This yields the following quadratic program:
\begin{equation*}
\begin{array}{clll}
\minimize& \|\bm A\bm x-\bm b\|^2\\
\subjectto & \bm x\in\RR^N.
\end{array}
\end{equation*}
The solution to this problem can be very sensitive to perturbations in the input data $\bm A\in\RR^{M\times N}$ and $\bm b\in\RR^M$~\cite{elden1980perturbation,golub2012matrix}. To address the issue of parameter uncertainty, El Ghaoui and Lebret \cite{el1997robust} recommend solving the following robust optimization problem:
\begin{equation}
\label{eq:RLS}
\begin{array}{clll}
\minimize &\displaystyle\sup_{(\bm U,\bm v)\in\mathcal U} \|(\bm A + \bm U)\bm x-(\bm b+\bm v)\|^2\\
\subjectto &\displaystyle \bm x\in\RR^N.
\end{array}
\end{equation}
Here, the goal is to find a solution $\bm x$ that minimizes the worst-case residual when the matrix $\bm U$ and the vector~$\bm v$ can vary within the prescribed uncertainty set $\mathcal U$. A tractable SDP reformulation of this problem is derived  in \cite{el1997robust} for problem instances where the uncertainty set is given by the Frobenius norm ball
\begin{equation*}
\label{eq:uncertainty_set_EG}
\mathcal B(r)=\left\{(\bm U,\bm v)\in\RR^{M\times N}\times\RR^M:\left\|\left[\bm U^\top\;\bm v\right]\right\|_F\leq r\right\}.
\end{equation*}
We consider the case when the uncertainty set is a polytope,
and compare our SDP scheme with the state-of-the-art approximate \slemma method described in Section \ref{sec:S-lemma}. We also compare our method with the approximation scheme proposed by Bertsimas and Sim~\cite{bertsimas2006tractable}, where the worst-case quadratic term in \eqref{eq:wc_functions} is replaced with an upper bounding function. Minimizing this upper-bounding function over $\bm x$ yields an approximate solution to the RQP. We note that the robust least squares problem can be solved to optimality using Benders' constraint generation method \cite{blankenship1976infinitely}. However, doing so entails solving a non-convex quadratic optimization problem at each step to generate a valid cut, which becomes intractable when $M$ and $N$ become large. 

In our experiment, we  consider the case where the uncertainty affects only the right-hand side vector $\bm b$ (\ie, $\bm U=\bm 0$). We assume that the uncertain parameter $\bm v$ depends affinely on $N_f$ \emph{factors} represented by $\bm \xi \in \RR^{N_f}$, where $N_f < M$. Specifically, we consider the uncertainty set
\begin{equation*}
\label{eq:uncertainty_LS}
\mathcal U=\left\{\bm v\in\RR^{M}:\bm v = \bm F\bm \xi,\; \bm \xi \in \RR^{N_f},\; \norm{\bm \xi}_\infty \leq 1,\; \norm{\bm \xi}_1 \leq \rho N_f\right\},
\end{equation*}
where $\bm F \in \RR^{M\times N_f}$ is the factor matrix and $\rho$ lies in the interval $[0,1]$. By substituting $\bm U = \bm 0$ and $\bm v = \bm F\bm \xi$ into \eqref{eq:RLS}, the resulting robust problem constitutes an instance of RQP \eqref{eq:RO} with the following input parameters:
\begin{equation*}
\begin{array}{c}
\bm A(\bm x)= \bm F,\quad\bm b(\bm x)= -2\bm F^\top (\bm A \bm x - \bm b) ,\quad c(\bm x) = (\bm A \bm x - \bm b)^\top (\bm A \bm x - \bm b),\\
\Xi = \left\{\bm \xi \in \RR^{N_f} : \norm{\bm \xi}_\infty \leq 1,\; \norm{\bm \xi}_1 \leq \rho N_f\right\}.
\end{array}
\end{equation*}
In order to solve the problem using our method, we modify the formulation discussed in Section \ref{sec:SDP_formulation} slightly, which leads to a tremendous reduction in the solution time. We discuss this modification in Appendix \ref{appendix:ls}.

We perform an experiment on problem instances of dimensions $M = 200$, $N = 20$ and $N_f = 30$. The experimental results are averaged over $100$ random trials generated in the following manner. In each trial, we sample the  matrix $\bm A$ and the vector $\bm b$ from the uniform distribution on $[-0.5,0.5]^{M\times N}$ and $[-0.5,0.5]^N$, respectively. Each row of the matrix $\bm F$ is sampled randomly from a standard simplex, and $\rho$ is generated uniformly at random from the interval $[0.1,0.25]$. For problems of this size, we are unable to solve the problem to optimality using Benders' method as the solver runs out of memory. Therefore, we put a time limit of $120$ seconds for each iteration of the Benders' method. By doing so, Benders' method yields a lower bound to the optimal worst-case residual, which we use as a baseline to compute the objective gaps for the approximation methods.

Table~\ref{tab:gaps_LS} summarizes the optimality gaps of the approximation methods. The results show that our method significantly outperforms the other two approximations in terms of the estimates of the worst-case residuals. While the other two approximations generate overly pessimistic estimates of the resulting worst-case residuals (with a relative difference of about 100\%), the worst-case residuals estimated using our method have negligible objective gaps.

Table~\ref{tab:times_LS} reports the solution times of finding the exact solution (using Benders' method) and the upper bounds provided by various approximation methods. It can be observed that the improvement in solution quality given by our method comes at the cost of longer solution times compared to other approximation methods. However, our method is still significantly faster than the exact Benders' method. 
We also note that while the approximation scheme described in \cite{bertsimas2006tractable} can be solved quickly, it is only valid when the uncertainty set is defined as a \emph{norm-bounded set} ($l_1 \cap l_\infty$ norm in our experiment). Our method, on the other hand, is applicable for general polyhedral uncertainty sets.
\begin{table}[h!]
	\color{black}
	\centering
	\begin{tabular}{c|rrrr}
		\multicolumn{1}{c}{} & \multicolumn{3}{c}{Objective gap}\\
		 \hline
		Statistic  & SDP & \slemma & B\&S  \\[0.0mm] \hline
		Mean & 0.0\% & 108.4\% & 99.7\%   \\
		10th Percentile & 0.0\% & 93.9\% & 80.5\% \\
		90th Percentile & 0.0\%&119.6\% & 115.3\% \\
		\hline\hline
	\end{tabular}
	\caption{\color{black} Numerical results comparing the proposed SDP approximation (`SDP'), the approximate \slemma method (`\slemma') and the approximation scheme proposed by Bertsimas and Sim~\cite{bertsimas2006tractable}~(`B\&S') for the least squares problem. The `objective gap' quantifies the increase in the worst-case residuals estimated using the approximation methods relative to the Benders' lower bound.
	\label{tab:gaps_LS}}
\end{table}
\begin{table}[h!]
	\color{black}
	\centering
	\begin{tabular}{c|rrrr}
		& Benders & SDP & \slemma & B\&S  \\[0.0mm] \hline
		Mean solution time (in secs) & 626.9 & 10.2 & 0.45\ & 0.004\\
		\hline\hline
	\end{tabular}
	\caption{\color{black} Solution times for the Benders' constraint generation method (`Benders'), the proposed SDP approximation (`SDP'), the approximate \slemma method (`\slemma') and the approximation scheme proposed by Bertsimas and Sim~\cite{bertsimas2006tractable}~(`B\&S') for the least squares problem.
		\label{tab:times_LS}}
\end{table}

\subsection{Project Management}
In this experiment, we consider the project crashing problem described in Example \ref{exa:project_crashing}, where the duration of activity~$(i,j)\in\mathcal A$ is given by the uncertain quantity $d_{ij} = (1+r_{ij})d_{ij}^0$. Here, $d_{ij}^0$ is the nominal activity duration and $r_{ij}$ represents exogenous fluctuations. We consider randomly generated project networks of size $|\mathcal V| = 30$ and \emph{order strength} 0.75,\footnote{The order strength denotes the fraction of all $|\mathcal V|(|\mathcal V|-1)/2$ possible precedences between the nodes that are enforced in the graph (either directly or through transitivity).} which gives rise to projects with an average of $67$ activities. Let $x_{ij}$ be the amount of resources that are used to expedite the activity $(i,j)$. We fix the feasible set of the resource allocation vector to 
$\mathcal X=\{\bm x\in[0,1]^{|\mathcal A|}:\mathbf e^\top\bm x\leq 
\frac{3}{4}|\mathcal A|\}$, so that at most $75\%$ of the activities can receive the maximum resource allocation. The uncertainty set of $\bm d$ is defined through a factor model as follows:
\begin{equation*}
	\mathcal D=\left\{\bm d \in \RR^{|\mathcal A|}:d_{ij} = (1+\bm f_{ij}^\top\bm{\chi})d_{ij}^0 \text{ for some }\bm{\chi}\in[0,1]^{N_f}, \quad\forall (i,j)\in\mathcal A\right\},
\end{equation*}
where  the factor size is fixed to $N_f=|\mathcal V|$. We set the nominal task durations to $\bm d^0=\mathbf e$. In each trial, we sample the  factor loading vector~$\bm f_{ij}$ from the uniform distribution on $[-\frac{1}{2N_f},\frac{1}{2N_f}]^{N_f}$, which ensures that the duration of each activity can deviate by up to $50\%$ of its nominal value. We can form the final mixed-integer uncertainty set $\Xi$ from $\mathcal D$ using the procedure described in Example \ref{exa:project_crashing} (Equation \eqref{eq:uncertainty_set_project_crashing}).

In our experiment, we compare the performance of our proposed SDP approximation with \emph{linear decision rules} (LDR) approximation scheme discussed in~\cite{chen2007robust,wiesemann2012robust} which we describe below. In Example \ref{exa:project_crashing}, for our reformulation, we model the second stage problem as the maximization problem over the binary variables~$\bm z$ (See Equation \eqref{eq:robust_project_crashing}). Alternatively, the second-stage problem can be written as the following minimization problem:
\begin{equation*}
\begin{array}{clll}
\displaystyle \minimize &\rho_{|\mathcal V|} - \rho_1\\
\subjectto & \bm \rho \in \RR^{|\mathcal V|},\\
&\rho_j - \rho_i \geq d_{ij} - x_{ij}, \forall (i,j) \in \mathcal A.
\end{array}
\end{equation*}
Here, $\bm \rho$ is second-stage variable which depends on the realization of the uncertain $\bm d$. In the LDR approximation scheme, $\bm \rho$ is restricted to be an affine function of $\bm d$, which yields a tractable conservative approximation. To assess the suboptimality of our SDP and the LDR approximation scheme, we solve the problem to optimality using Benders' constraint generation method.

Table \ref{tab:gaps_PM} presents the optimality gaps of the two approximation methods for $100$ randomly generated project networks. The solution times of all the methods are reported in Table \ref{tab:times_PM}. It can be observed that our proposed SDP approximation consistently provides near-optimal estimates of the worst-case project makespan ($\sim 2.7\%$ gaps). On the other hand, while the LDR bound can be computed quickly, the bounds are too pessimistic ($\sim 27\%$ gaps). The $10$th and $90$th percentiles of the objective gaps further indicate that the estimated makespan generated from our SDP approximation \emph{stochastically dominates} the makespan generated from the LDR approximation. In addition to a higher estimate of the worst-case makespan, the actual makespan of the resource allocation $\bm x$ generated by the LDR approximation is also higher than the ones generated by our method, as shown in the ``Suboptimality'' column in Table \ref{tab:gaps_PM}. The experimental results demonstrate that our method generates near-optimal solutions to the project crashing problem faster than solving the problem to optimality using Benders' method. 
\begin{table}[h!]
	\color{black}
	\centering
	\begin{tabular}{c|rr|rr}
		\multicolumn{1}{c}{} & \multicolumn{2}{c}{Objective gap} &
		\multicolumn{2}{c}{Suboptimality} \\ \hline
		Statistic & SDP & LDR & SDP & LDR \\[0.0mm] \hline
		Mean & 2.7\%& 26.9\% & 1.7\% & 10.0\%  \\
		10th Percentile & 2.0\% & 23.8\% & 1.3\% & 7.1\% \\
		90th Percentile & 3.2\% & 30.2\% & 2.2\% & 12.8\%   \\ 
		\hline\hline
	\end{tabular}
	\caption{\color{black} Numerical results for the proposed SDP approximation (`SDP') and the linear decision rules approximation (`LDR') for the project crashing problem. The `objective gap' quantifies the increase in the worst-case makespan estimated using the approximation methods	relative to the optimal worst-case makespan. The `suboptimality' quantifies the increase in the actual worst-case makespan of the resource allocations found using the approximation methods relative to the optimal worst-case makespan.
	\label{tab:gaps_PM}}
\end{table}
\begin{table}[h!]
	\color{black}
	\centering
	\begin{tabular}{c|rrrr}
		& Benders & SDP  & LDR  \\[0.0mm] \hline
		Mean solution time (in secs) & 518.0 & 85.0 & 0.16\\
		\hline\hline
	\end{tabular}
	\caption{\color{black} Solution times for the Benders' constraint generation method (`Benders'), the proposed SDP approximation (`SDP') and the linear decision rules approximation (`LDR') for the project crashing problem.
		\label{tab:times_PM}}
\end{table}

\subsection{Multi-Item Newsvendor}
We now demonstrate the advantage of using a mixed-integer uncertainty set over using a continuous uncertainty set in a variant of the multi-item newsvendor problem, where an inventory planner must determine the vector $\bm x\in\RR_+^N$ of order quantities for $N$ different raw-materials at the beginning of a planning period. The raw materials are used to make $K$ different types of products which are then sold to customers. The matrix $\bm F \in \RR^{N\times K}$ is such that $F_{nk}$ represents the amount of raw material $n$ required to make $1$ unit of product $k$. The demands~$\bm\xi\in\ZZ_+^K$ for these products are uncertain and are assumed to belong to a prescribed discrete uncertainty set~$\Xi$. We assume that there are no ordering costs on the raw materials but the total order quantity must not exceed a given budget $B$. Excess inventory of the $n$-th raw material incurs a per-unit holding cost of~$g_n$, while the unmet demand incurs a quadratic penalty with coefficient~$\lambda$. The quadratic penalty on the unmet demand is added to discourage stock-outs \cite{hay1970production, xia2004real}.

For any  realization of the demand vector $\bm{\xi}$, the total cost of a fixed order $\bm x$ is given by
\begin{equation*}
\begin{array}{rlll}
\displaystyle\mathcal R(\bm x,\bm{\xi})&\displaystyle=\;\sum_{n = 1}^N g_n\left(\bm x_n - \sum_{k = 1}^K F_{nk}\xi_k\right)^+ + \lambda\sum_{n=1}^N \left(\left(\sum_{k = 1}^K F_{nk}\xi_k-\bm x_n\right)^+\right)^2\\
&\displaystyle=\;\inf_{\bm y_1\in\RR^N, \bm y_2\in\RR^N}\left\{\bm g^\top\bm y_1 + \lambda\bm y_2^\top\bm y_2~:~\bm y_1\geq \bm x-\bm F \bm{\xi},\;\bm y_1 \geq \bm 0,\; \bm y_2\geq \bm F \bm{\xi}-\bm x,\; \bm y_2 \geq \bm 0\right\}.
\end{array}
\end{equation*}
Here, we use the notation $z^+$ to denote $\max\{z,0\}$. The objective of a risk-averse inventory planner is then to determine a vector of order quantities $\bm x$ that minimizes the worst-case total cost $\sup_{\bm\xi\in\Xi}\mathcal R(\bm x,\bm{\xi})$. This gives rise to the optimization problem
\begin{equation}
\label{eq:newsvendor}
\begin{array}{clll}
\minimize&\displaystyle\sup_{\bm{\xi}\in\Xi}\mathcal R(\bm x,\bm{\xi})\\
\subjectto&\displaystyle\bm x\in\RR_+^N\\
&\mathbf e^\top\bm x\leq B.
\end{array}
\end{equation}
This problem constitutes an instance of the two-stage robust quadratic optimization problem \eqref{eq:RO_two_stage}  with parameters 
\begin{equation*}
\begin{array}{lll}
&\displaystyle\bm A(\bm x)=\bm 0,\;\bm b(\bm x)=\bm 0,\;c(\bm x)=0,\;\bm P=\sqrt{\lambda}\begin{bmatrix}\bm 0&\bm 0\\ \bm 0&\mathbb I \end{bmatrix},\;\bm R=\bm 0,\;\bm r=\begin{bmatrix}\bm g\\ \bm 0\end{bmatrix},\;\vspace{.1cm}\\
&\displaystyle\bm T(\bm x)=\begin{bmatrix} -\bm F\\ \bm 0\\ \bm F \\ \bm 0 \end{bmatrix},\; \bm h(\bm x) = \begin{bmatrix}\bm x\\ \bm 0\\ -\bm x \\ \bm 0 \end{bmatrix},\; \text{and}\; \bm W = \begin{bmatrix}\mathbb I & \bm 0 \\ \mathbb I & \bm 0 \\ \bm 0 & \mathbb I \\ \bm 0 & \mathbb I
\end{bmatrix}.
\end{array}
\end{equation*}

In this experiment, we compare the performance of the SDP approximation of the optimization problem~\eqref{eq:newsvendor} when $\bm\xi$ is explicitly modeled as a discrete vector versus the model where the integer restriction on $\bm \xi$ is ignored. We consider problems with $N=8$ raw materials and $K=5$ products. We fix the vector of holding costs to $\bm g=\mathbf e$, the ordering budget to $B=20$, and the penalty constant to $\lambda = 10$. All experimental results are averaged over $100$ random trials generated in the following manner. We assume that every product uses one unit each of two randomly chosen raw materials. 
In each trial, we generate every element of $\bm G \in \RR^{2\times K}$ uniformly at random from the interval~$[0,1]$. We define the actual discrete uncertainty set ($\Xi_{\rm True}$) and the set formed by ignoring the integrality assumption ($\Xi_{\rm Cont}$)  as: 
\begin{equation*}
\Xi_{\rm True}=\left\{\bm{\xi}\in\ZZ_+^K:\bm{\xi}\leq 15\mathbf e,\;\bm G\bm{\xi}\leq 0.75 \mathbf e\right\}\quad\text{and}\quad \Xi_{\rm Cont} = \left\{\bm{\xi}\in\RR_+^K: \bm{\xi}\leq 15\mathbf e, \;\bm G\bm{\xi}\leq 0.75 \mathbf e\right\},
\end{equation*}
and solve the SDP approximations of \eqref{eq:newsvendor} with inputs $\Xi_{\rm True}$ and $\Xi_{\rm Cont}$. 
We use the Benders' constraint generation method to solve the problem to optimality.

The statistics of the optimality gaps generated by the models using $\Xi_{\rm True}$ and $\Xi_{\rm Cont}$ are reported in Table~\ref{tab:gaps_newsvendor}. The solution times of all the methods are presented in Table \ref{tab:times_newsvendor}. We observe that the model using $\Xi_{\rm True}$ as the uncertainty set provides much better estimates of the worst-case cost ($\sim 13\%$ average gap) than the model using $\Xi_{\rm Cont}$ ($\sim 85\%$ average gap). Furthermore, our proposed SDP approximation can be solved much faster than solving the problem exactly using Benders' method. For problems with integer uncertainty, these experimental results suggest that the SDP approximation which utilizes the integer restriction gives high-quality solutions in comparison to the approximation which neglects these restrictions.


\begin{table}[h!]

	\centering
	\begin{tabular}{c|rr|rr}
		\multicolumn{1}{c}{} & \multicolumn{2}{c}{Objective gap}&
		\multicolumn{2}{c}{Suboptimality} \\ \hline
		Statistic & SDP True & SDP Cont & SDP True & SDP Cont \\[0.0mm] \hline
		Mean            & 13.1\%& 85.2\% & 13.0\% & 84.9\%  \\
		10th Percentile & 0.0\% & 25.8\% & 0.0\% & 25.7\% \\
		90th Percentile & 28.4\% & 173.7\% & 27.6\% & 173.5\%   \\ 
		\hline\hline
	\end{tabular}
	\caption{\color{black} Numerical results for the SDP approximations for the newsvendor model with integer uncertainty set (`SDP True') and the model that ignores the integrality restriction (`SDP~Cont'). The `objective gap' quantifies the increase in the worst-case cost estimated using the approximation methods relative to the optimal worst-case cost. The `suboptimality' quantifies the increase in the actual worst-case cost of the order quantities found using the approximation methods relative to the optimal worst-case cost.
	\label{tab:gaps_newsvendor}}
\end{table}
\begin{table}[h!]

	\centering
	\begin{tabular}{c|rrrr}
		& Benders & SDP True & SDP Cont  \\[0.0mm] \hline
		Mean solution time (in secs) & 52.9 & 11.3 & 0.63\\
		\hline\hline
	\end{tabular}
	\caption{\color{black} Solution times for the Benders' constraint generation method for the newsvendor problem (`Benders'), the SDP approximations for the model with integer uncertainty set (`SDP True') and the model that ignores the integrality restriction (`SDP~Cont').
		\label{tab:times_newsvendor}}
\end{table}

\section{Conclusion}
\label{sec:conclusion}

The paper aims at developing a near-optimal approximation method for one- and two-stage robust quadratic programs with mixed-integer uncertain parameters. The approximation method developed in the paper is not only more general than the current state-of-the-art approximate \slemma method---since the latter only handles continuous uncertain parameters---but is guaranteed to yield a better estimate of the optimal value. Furthermore, our numerical experiments show that the difference in the performance of the two approximation method can be quite significant. Our experimental results also demonstrate the disadvantage of ignoring the integer restrictions on the uncertain parameters. In the future, it would be interesting to extend the model to the distributionally robust setting, where additional information about the distribution of the uncertain parameters is explicitly incorporated.

\newpage




\appendix
\section{Implementation of Least Squares in Section \ref{sec:least_squares}}
\label{appendix:ls}
In this section, we limit the discussion to the case when there are no discrete uncertain parameters. In the paper, we consider the uncertainty set to be of the standard form $\Xi_S:= \{ \bm \xi \geq \bm 0 : \bm S\bm \xi = \bm t \}$. However, in some cases, the uncertainty sets are more naturally represented in the inequality form $\Xi_I := \{\bm \xi: \bm S\bm \xi \leq \bm t \}$. Transforming the uncertainty set in standard form involves introducing additional variables and constraints which increases the problem size. As an example, in the least squares experiment in Section \ref{sec:least_squares}, we consider the uncertainty set $\Xi=\left\{\bm \xi \in \RR^{N_f}:\; \norm{\bm \xi}_\infty \leq 1,\; \norm{\bm \xi}_1 \leq \rho N_f\right\}$. By lifting, the uncertainty set can be equivalently written as \mbox{$\Xi_{LS}=\left\{(\bm \xi,\bm \gamma):\bm \xi \in \RR^{N_f},\;\bm \gamma \in \RR^{N_f},\; -\bm \xi \leq \bm \gamma,\; \bm \xi \leq \bm \gamma,\; \bm \gamma \leq \mathbf e,\; \mathbf e^\top \bm\gamma \leq \rho N_f\right\}$}, which is of the form $\Xi_I$. The paper \cite{burer2012copositive} presents a \emph{generalized copositive programming} (GCP) reformulation of non-convex quadratic programs over conic representable sets. In \cite{xu2018copositive}, the authors consider a conservative approximation when the cone is polyhedral, which is relevant for the polyhedral uncertainty sets that we consider. Utilizing this GCP-based approximation, the robust least squares problem 
\begin{equation*}
\begin{array}{clll}
\label{eq:RLS2}
\minimize& \displaystyle\sup_{\bm \xi \in \Xi_I} \|\bm A\bm x-(\bm b+\bm F\bm \xi)\|^2\\
\subjectto &\bm x\in\RR^N
\end{array}
\end{equation*}
that we consider in Section \ref{sec:least_squares} yields the following conservative SDP approximation:
\begin{equation}
\label{eq:ls_gcp_approx}
\begin{array}{clll}
\minimize &\tau + (\bm A \bm x - \bm b)^\top (\bm A \bm x - \bm b)\\
\subjectto&\displaystyle  \bm x\in \RR^N,\;\bm \mu\in\RR^J,\;\bm N \in\RR^{J\times J},\;\tau \in \RR\\
&\displaystyle \bm \mu \geq \bm 0,\; \bm N \geq \bm 0,\\
&\displaystyle  \begin{bmatrix}
-\bm F^\top \bm F & \bm F^\top (\bm A \bm x - \bm b)\\
(\bm A \bm x - \bm b)^\top\bm F & 0
\end{bmatrix} + \begin{bmatrix}
\bm 0 & \frac{1}{2}\bm S^\top \bm \mu \\ \frac{1}{2}\bm \mu^\top \bm S & \tau - \bm \mu^\top \bm t
\end{bmatrix}\succeq \begin{bmatrix}-\bm S^\top\\ \bm t^\top\end{bmatrix}\bm N\begin{bmatrix}-\bm S & \bm t\end{bmatrix}.\vspace{.1cm}
\end{array}
\end{equation}

We use this formulation with the uncertainty set $\Xi_{LS}$ for our experiment in Section \ref{sec:least_squares}. Skipping the conversion to the standard form generates same the objective value, but reduces the average solution time from $85$ seconds to about $10$ seconds. We emphasize that this alternate formulation might not be valid when some of the components of $\bm \xi$ are restricted to be integers. Therefore, it is not straightforward to apply it to the project management and the newsvendor experiments, both of which contain discrete uncertain parameters.


\bibliographystyle{plain}
\bibliography{bibliography} 



\end{document}